\newtheorem{theorem}{Theorem}[section]
\newtheorem{lemma}[theorem]{Lemma}
\newtheorem{definition}[theorem]{Definition}
\newtheorem{proposition}[theorem]{Proposition}
\newtheorem{example}[theorem]{Example}
\def\<{\langle}
\def\>{\rangle}
\def\a{\alpha}
\def\c{\cdot}
\def\o{\otimes}
\date{}
\begin{document}
\renewcommand{\baselinestretch}{1.2}
\renewcommand{\arraystretch}{1.0}
\title{\bf  On Nijenhuis Lie triple systems}
\date{}
\author{{\bf Shuangjian Guo$^{1}$, Bibhash Mondal$^{2}$, Ripan Saha$^{3}$\footnote
        { Corresponding author:~~Email: ripanjumaths@gmail.com}}\\
{\small 1.  School of Mathematics and Statistics, Guizhou University of Finance and Economics} \\
{\small   Guiyang  550025, P. R. of China}\\
{\small 2. Department of Mathematics, Behala College}\\
{\small Behala 700060, Kolkata, India}\\
{\small 3. Department of Mathematics, Raiganj University}\\
{\small  Raiganj 733134, West Bengal, India}
}
 \maketitle
\begin{center}
\begin{minipage}{12.cm}
\begin{center}{\bf ABSTRACT}\end{center}

In this paper, we investigate the mathematical structure of Nijenhuis Lie triple systems, an extension of classical Lie triple systems augmented with the Nijenhuis operator. Our study focuses on the cohomology of Nijenhuis Lie triple systems and demonstrates how abelian extensions of Nijenhuis Lie triple systems are related to cohomology groups. Additionally, we define Nijenhuis Lie triple 2-systems and also classify `strict' and `skeletal' Nijenhuis Lie triple 2-systems in terms of crossed modules and the cohomology of Nijenhuis Lie triple systems.
\medskip

{\bf Key words}:  Lie triple system,  Nijenhuis operator,  Cohomology, Abelian extension, Nijenhuis Lie triple 2-system.
\medskip

 {\bf Mathematics Subject Classification:} 17A30,  17B56, 17B99.
\end{minipage}
\end{center}
\normalsize\vskip0.5cm
\section{Introduction}
Lie triple systems, a class of mathematical structures, play a fundamental role in various areas of mathematics and theoretical physics. The concept of Lie triple systems was initially introduced by Jacobson\cite{J49} and further formulated by Yamguti\cite{Y60}. It also emerged in Cartan's exploration of Riemannian Geometry\cite{CK00}, where it was extensively developed for Symmetric spaces and related areas. Notably, the tangent space of a symmetric space constitutes a Lie triple system. These systems have found significant applications in physics, particularly in elementary particle theory and quantum mechanics, as well as in the numerical analysis of differential equations. They have become a captivating subject in mathematics, with their structure first studied by Lister\cite{L52}. For further exploration of Lie triple system, refer to \cite{CS, H23,HP02, KT04, Z04, ZCM16}.

 Dorfman \cite{Dorfman1993} investigated the Nijenhuis operator through the deformation of Lie algebras. Additionally, Nijenhuis operators on Lie algebras play a crucial role in the study of the integrability of nonlinear evolution equations \cite{Dorfman1993}. The introduction of Dirac structures by Dorfman provided new interpretations for existing Nijenhuis setups. In 2004, Gallardo and Nunes \cite{CN04} introduced Dirac Nijenhuis structures, while Longguang and Baokang \cite{LB04} independently developed Dirac Nijenhuis manifolds. In 2011, Kosmann-Schwarzbach \cite{kos} explored Dirac Nijenhuis structures on Courant algebroids.  In \cite{Wang}, the authors defined the concept of a Nijenhuis operator on a pre-Lie algebra, generating a trivial deformation of the pre-Lie algebra. In \cite{MS24}, the authors delved into Nijenhuis operators on Leibniz algebras from a cohomological standpoint. For further exploration of Nijenhuis operators in various other algebraic structures, refer to \cite{DS, HCM, LSZB}.

In this paper, we investigate Nijenhuis Lie triple systems, an extension of classical Lie triple systems augmented with the Nijenhuis operator. The combination of Lie triple systems with the Nijenhuis operator allows for a more nuanced investigation of the geometric and algebraic intricacies involved. 

We define the representation and cohomology of Nijenhuis Lie triple systems. The study of Rota-Baxter operators \cite{B60, JS21} and their modified counterparts \cite{MS24} across diverse algebraic structures has garnered significant attention. In this paper, we establish the relationship between Nijenhuis operators, Rota-Baxter operators, and modified Rota-Baxter operators within the context of Lie triple systems. We also study abelian extensions of Nijenhuis Lie triple systems and show that the isomorphism classes of abelian extensions are classified by the cohomology groups. The interplay between abelian extensions and cohomology provides a comprehensive picture of Nijenhuis Lie triple systems, enriching our comprehension of their algebraic and geometric intricacies.

Furthermore, we introduce the notion of Nijenhuis Lie triple 2-systems and demonstrate that skeletal Nijenhuis Lie triple 2-systems are classified by 5-cocycles of Nijenhuis Lie triple systems. We also establish a one-to-one correspondence between strict Nijenhuis Lie triple 2-systems and crossed modules of Nijenhuis Lie triple systems.

This paper is organized as follows: In Chapter \ref{chap-2}, we revisit the definition of Lie triple systems and delve into their cohomology, laying the groundwork for our subsequent discussions. Chapter \ref{chap-3} explores Nijenhuis Lie triple systems and their representations, offering insights into their distinctive features and structural properties. In Chapter \ref{chap-4}, we introduce a specialized cohomology theory tailored for Nijenhuis Lie triple systems, providing a deeper understanding of their algebraic and geometric properties. Chapter \ref{chap-5} shifts our focus to the study of abelian extensions, employing cohomology groups to explore the intricate relationship between extensions and the cohomology of Nijenhuis Lie triple systems. Finally, in Chapter \ref{chap-6}, we introduce the concept of Nijenhuis Lie triple 2-systems and elucidate the classification of skeletal Nijenhuis Lie triple systems through cocycles of our cohomology.

Throughout this paper, all vector spaces and (multi)linear maps are over an algebraically closed field $\mathbb{K}$ of characteristic 0.
\bigskip
\section{ Preliminaires}\label{chap-2}
\def\theequation{\arabic{section}.\arabic{equation}}
\setcounter{equation} {0}

We start with the background of Lie triple systems and their cohomology that we refer the reader to~ \cite{CM, L52, Y60} for more details.
\medskip

\begin{definition}
 A vector space $T$ together with a trilinear map $(x,y,z) \mapsto [x,y,z]$ is called a Lie triple system  if
\begin{eqnarray*}
&& [x, x, y]=0,\\
&& [x, y, z]+[y, z, x]+[z, x, y]=0,\\
&&  [x_1,x_2,[x_3,x_4,x_5]] = [[x_1,x_2, x_3], x_4, x_5] + [x_3,[x_1,x_2,x_4],x_5] + [x_3,x_4,[x_1,x_2, x_5]],
\end{eqnarray*}
for all $x, y, z, x_i \in T,  1\leq i \leq5$.
\end{definition}

\begin{example}
Let $(T, [\c, \c])$ be a Lie algebra. We define $[\c,\c,\c]: T\times T\times T\rightarrow T$ by
\begin{eqnarray*}
[x, y, z] = [[x, y], z], \forall x, y, z \in T.
\end{eqnarray*}
Then $(T, [\c, \c, \c])$ becomes a Lie triple system.
\end{example}

\begin{definition} Let $(T,[\cdot, \cdot, \cdot])$ be a Lie triple system,   $V$ an $\mathbb{K}$-vector space. If $\theta : T \times T \rightarrow \mathfrak{gl}(V)$ is a bilinear map such that for all $x_1, x_2, x_3,x_4\in T$,
\begin{eqnarray}
&&\theta(x_3, x_4)\theta(x_1, x_2)-\theta(x_2, x_4)\theta(x_1,x_3) - \theta(x_1,[x_2, x_3, x_4]) + D(x_2, x_3)\theta(x_1,x_4) = 0,\\
&&\theta(x_3, x_4)D(x_1, x_2)- D(x_1, x_2)\theta(x_3, x_4) + \theta([x_1, x_2, x_3], x_4) + \theta(x_3,[x_1, x_2, x_4]) = 0,~~~~~
\end{eqnarray}
where $D(x_1, x_2) = \theta(x_2, x_1)-\theta(x_1, x_2)$, then $(V,\theta)$ is called the representation of $(T,[\cdot, \cdot, \cdot])$. $(V, 0)$ is called the trivial representation of $(T,[\cdot, \cdot, \cdot])$.
\end{definition}

It follows that any Lie triple system $T$ is a representation of itself with
\begin{align*}
    \theta (x, y)z =  [ z, x, y],~~~~ D(x, y)z=[x, y, z].
\end{align*}
This is called the adjoint representation.

\medskip

Let $\theta$ be a representation of $(T,[\cdot, \cdot, \cdot])$ on $V$.   Denote by
\begin{eqnarray*}
C^{2n+1}(T, V):=\mathrm{Hom}(\underbrace{T\times \cdots \times T}_{2n+1}, V), n\geq 0,
\end{eqnarray*}
which is the space of $(2n+1)$-cochains.  For any $f \in C^{2n+1}(T, V)$ satisfies
\begin{eqnarray*}
&&f(x_1, \ldots, x_{2n-2},  x, x, y)=0,\\
&& f(x_1,\ldots, x_{2n-2}, x, y, z)+f(x_1,\ldots, x_{2n-2},  y, z, x)+f(x_1,\ldots, x_{2n-2}, z, x, y)=0,
\end{eqnarray*}

The coboundary operator $\delta^{2n-1}: C^{2n-1}(T, V) \rightarrow C^{2n+1}(T, V)$, $n\geqslant 1$ is given by
\begin{align*}
&\delta^{2n-1} f(x_1,\ldots,x_{2n+1})\\
&=\theta(x_{2n}, x_{2n+1})f(x_1,x_2,\ldots,x_{2n-1})-\theta( x_{2n-1}, x_{2n+1})f(x_1,x_2,\ldots,x_{2n-2}, x_{2n})\\
&\ \ \ \ \ \ +\sum_{i=1}^n(-1)^{i+n}D(x_{2i-1}, x_{2i}) f(x_1,\ldots,\widehat{x_{2i-1}},\widehat{x_{2i}},\ldots, x_{2n+1})\\
&\ \ \ \ \ \ +\sum_{i=1}^n\sum_{j=2i+1}^{2n+1}(-1)^{n+i+1} f(x_1,\ldots,\widehat{x_{2i-1}},\widehat{x_{2i}},\ldots, [x_{2i-1}, x_{2i}, x_j],\ldots, x_{2n+1}),
\end{align*}
for any $x_1, x_2, \ldots, x_{2n+1} \in T, f\in C^{2n-1}(T, V)$,  where $\widehat{}$ denotes omission.  The  $(2n+1)$-th cohomology group is  denoted by $H^\ast(T, V).$

\begin{definition}
A linear map $N: T\rightarrow T$ is called a Nijenhuis operator on a Lie triple system $(T, [\c, \c, \c])$ if $N$ satisfies the following condition
\begin{eqnarray*}
&& [Nx, Ny, Nz]=N\big( [Nx, Ny, z]+[x, Ny, Nz]+[Nx, y, Nz]\\
&&\ \ \ \ \ \ \ \ \ \ \ \ \ \ \ \ \ \ \ \ \ \  \ \  -N([Nx, y, z]+[x, Ny, z]+[x, y, Nz]-N[x, y, z])\big).
\end{eqnarray*}
\end{definition}

\begin{definition}
A linear map $R: T\rightarrow T$ is called a Rota-Baxter operator of weight $\lambda$ on a Lie triple system $(T, [\c, \c, \c])$ if $R$ satisfies the following condition
\begin{eqnarray*}
&& [Rx, Ry, Rz]\\
&=&R( [Rx, Ry, z]+[x, Ry, Rz]+[Rx, y, Rz]+\lambda [Rx, y, z]+\lambda[x, Ry, z]+\lambda[x, y, Rz]+\lambda^2[x, y, z]).
\end{eqnarray*}
\end{definition}

\bigskip
\section{ Nijenhuis Lie triple systems and  representations}\label{chap-3}
\def\theequation{\arabic{section}.\arabic{equation}}
\setcounter{equation} {0}
In this section, we study  Nijenhuis Lie triple systems and introduce the notion of their representations.
\begin{definition}
A Lie triple system $(T, [\c, \c, \c])$ equipped with a Nijenhuis operator $N$ on $T$ is called
a Nijenhuis  Lie triple system and it is denoted by $(T_N , [\c, \c, \c])$ or simply by $T_N$.
\end{definition}

\begin{definition}
 Let $(T_N , [\c, \c, \c])$ and $(T'_{N'} , [\c, \c, \c]')$ be two Nijenhuis  Lie triple systems. Then a linear  map $\phi: T\rightarrow T$ is called a morphism of Lie triple systems if the map $\phi$ is a Lie triple system morphism satisfying the condition $N'\circ \phi=\phi \circ N$.
\end{definition}
\begin{proposition}(\cite{CM})
 Let $(T_N , [\c, \c, \c])$ be a Nijenhuis  Lie triple system. Then  $(T, [\c, \c, \c]_N)$ is a Lie triple system, where \begin{eqnarray*}
[x, y, z]_N&=&[Nx, Ny, z]+[x, Ny, Nz]+[Nx, y, Nz]\\
&&  -N([Nx, y, z]+[x, Ny, z]+[x, y, Nz]-N[x, y, z]),
\end{eqnarray*}
and $N$ is a homomorphism from $(T, [\c, \c, \c]_N)$ and $(T, [\c, \c, \c])$.
\end{proposition}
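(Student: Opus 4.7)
The plan is to verify the three defining axioms of a Lie triple system for the deformed bracket $[\c,\c,\c]_N$ and then observe that the homomorphism property is essentially a reformulation of the Nijenhuis condition itself. The first two axioms reduce straightforwardly to the corresponding axioms for $[\c,\c,\c]$, while the fundamental five-variable identity will require genuine use of the Nijenhuis hypothesis and constitutes the main computational obstacle.

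For the vanishing identity $[x,x,y]_N=0$, I would substitute $y=x$ in the defining formula. The summands $[Nx,Nx,y]$ and $[x,x,Ny]$ vanish immediately by $[u,u,v]=0$, while $[x,Nx,Ny]+[Nx,x,Ny]$ vanishes by polarizing $[u,u,Ny]=0$ at $u=x+Nx$. The terms inside the outer $N(\cdots)$ cancel by the same device, so the whole expression is zero. For the cyclic identity, when I form the cyclic sum $[x,y,z]_N+[y,z,x]_N+[z,x,y]_N$, each of the seven terms in the definition of $[\c,\c,\c]_N$ produces a sum of the shape $[Nx,Ny,z]+[Ny,Nz,x]+[Nz,Nx,y]$ (and similar with other positions of $N$), each of which vanishes by the original cyclic identity applied to the appropriate triple.

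The heart of the argument is the five-variable identity
\begin{equation*}
[x_1,x_2,[x_3,x_4,x_5]_N]_N=[[x_1,x_2,x_3]_N,x_4,x_5]_N+[x_3,[x_1,x_2,x_4]_N,x_5]_N+[x_3,x_4,[x_1,x_2,x_5]_N]_N.
\end{equation*}
My plan would be to expand both sides using the definition of $[\c,\c,\c]_N$, producing a large collection of iterated brackets in $[\c,\c,\c]$ with various $N$'s inserted at the five slots. I would sort these terms by the total number of $N$-applications, which ranges from two up to six. For each fixed total, the terms with matching $N$-patterns should cancel either by the five-variable identity of the original bracket $[\c,\c,\c]$ applied to the appropriately $N$-decorated arguments, or by rewriting internal expressions of the form $N([Nu,Nv,w]+\cdots)$ via the Nijenhuis condition until everything collapses to $[N(\,\cdot\,),N(\,\cdot\,),N(\,\cdot\,)]$ pieces that match on both sides. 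This is bookkeeping-intensive rather than conceptually deep, but it is where all the content of the Nijenhuis condition is consumed.

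Finally, the homomorphism statement $N[x,y,z]_N=[Nx,Ny,Nz]$ is an immediate translation of the definition of a Nijenhuis operator: the quantity inside $N(\cdots)$ in the Nijenhuis identity is precisely the expression defining $[x,y,z]_N$, and the Nijenhuis identity equates its image under $N$ with $[Nx,Ny,Nz]$. Thus once the five-variable identity is established, the homomorphism property requires no further work.
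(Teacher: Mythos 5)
The paper gives no argument for this proposition at all --- it is quoted from \cite{CM} --- so the only question is whether your outline would actually close. The easy parts are fine: the homomorphism property $N[x,y,z]_N=[Nx,Ny,Nz]$ is indeed a verbatim restatement of the Nijenhuis identity, and your treatment of $[x,x,y]_N=0$ is correct. In the cyclic identity your stated reason is slightly off: the orbit of a single summand such as $[Nx,Ny,z]$ under the cyclic permutation of $(x,y,z)$ is $[Nx,Ny,z]+[Ny,Nz,x]+[Nz,Nx,y]$, which is \emph{not} an instance of the cyclic axiom for any one triple; you must instead regroup the nine two-$N$ terms across the three different summands into the three genuine cyclic sums over $(Nx,Ny,z)$, $(x,Ny,Nz)$ and $(Nx,y,Nz)$ (and likewise for the terms inside the outer $N$). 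The regrouped argument works, but not term-orbit by term-orbit as you describe it.

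The genuine gap is the five-variable identity, which is the entire content of the proposition, and your proposal does not prove it --- it only asserts that after expansion the terms ``should cancel.'' Moreover, the one concrete organizing principle you offer is wrong: every summand of $[x,y,z]_N$ is homogeneous of degree exactly two in $N$ (counting $N^2$ as two and the outer $N$'s as well), so every term of the doubly iterated bracket has degree exactly four; there is no stratification ``from two up to six'' to sort by. This indicates the expansion was not actually attempted, and the cancellation is not a routine matching of $N$-patterns against the original fundamental identity --- one must repeatedly invoke the Nijenhuis identity to convert expressions of the form $[Nu,Nv,w]+[u,Nv,Nw]+[Nu,v,Nw]-N(\cdots)$ into $N^{-1}$-free data, and organizing this is where the actual work lies. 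As written, the proposal establishes the two trivial axioms and the homomorphism property but leaves the fundamental identity, and hence the proposition, unproven.
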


\begin{example}
Consider 2-dimensional Lie triple system  $\mathbb{R}^2$  with a basis $\{e_1, e_2\}$ and the nonzero multiplication is given by
\begin{eqnarray*}
[e_1, e_2, e_2]=e_1.
\end{eqnarray*}
Consider the linear map $N: \mathbb{R}^2\rightarrow \mathbb{R}^2$  defined by $N(x) = Ax$,  where $A=\left(
                                            \begin{array}{ccc}
                                              a & b \\
                                              c & d
                                            \end{array}
                                          \right)$.
Then $N$ is a   Nijenhuis operator on $(T, [\cdot, \cdot, \c])$  if and only if
\begin{eqnarray*}
&&[Ne_i,Ne_j, Ne_k]= N\big( [Ne_i, Ne_j, e_k]+[e_i, Ne_j, Ne_k]+[Ne_i, e_j, Ne_k]\\
&&\ \ \ \ \ \ \ \ \ \ \ \ \ \ \ \ \ \ \ \ \ \  \ \ \  -N([Ne_i, e_j, e_k]+[e_i, Ne_j, e_k]+[e_i, e_j, Ne_k]-N[e_i, e_j, e_k])\big)   ~~~~~\forall i, j, k=1, 2.
\end{eqnarray*}
A direct calculation can be obtained that $N$ is a   Nijenhuis operator on $(T, [\cdot, \cdot, \c])$  if and only if
\begin{eqnarray*}
c=0\ \ \ \mathrm{and} \ \ ad(a+d)=0.
\end{eqnarray*}
\end{example}
\begin{definition}
A linear map $R: T\rightarrow T$ is called a modified Rota-Baxter operator of weight $\lambda$ on a Lie triple system $(T, [\c, \c, \c])$ if $R$ satisfies the following condition
\begin{eqnarray*}
&& [Rx, Ry, Rz]\\
&=&R( [Rx, Ry, z]+[x, Ry, Rz]+[Rx, y, Rz]-\lambda[x, y, z])+\lambda [Rx, y, z]+\lambda[x, Ry, z]+\lambda[x, y, Rz].
\end{eqnarray*}
\end{definition}
Recall  that  a linear map $R: T\rightarrow T$ is said to be  a modified  Rota-Baxter operator of  weight $\lambda$ on the  Lie algebra $(T, [\cdot, \cdot])$ if $R$ satisfies
\begin{eqnarray*}
[R(x), R(y)]=R([R(x), y]+[x, R(y)])+\lambda[x, y], \text{~for~} x, y\in T.
\end{eqnarray*}
By Example 2.2, we have
\begin{example}
Let $R$ be a modified Rota-Baxter operator of weight $\lambda$ on the  Lie algebra $(T, [\cdot, \cdot])$. Then $R$ is a
modified Rota-Baxter operator of weight $\lambda$ on the induced  Lie triple system $(T, [\cdot, \cdot, \c])$.
\end{example}
\begin{lemma}
Let $(T, [\c, \c, \c])$ be a Lie triple system. Then $R$ is a Rota-Baxter operator of  weight $\lambda$ if and only if $ 2R+\lambda \mathrm{id}$ is a modified Rota-Baxter operator of weight $-\lambda^2$.
\end{lemma}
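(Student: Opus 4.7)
The plan is to set $P := 2R + \lambda\,\mathrm{id}$ and to expand the modified Rota-Baxter identity of weight $-\lambda^2$ for $P$ using trilinearity of $[\c,\c,\c]$, then to match it term by term against the weight-$\lambda$ Rota-Baxter identity for $R$. Since both conditions are polynomial identities of degree $3$ in the operator, after expansion every triple bracket appearing on the two sides carries some subset of the arguments decorated by $R$; the whole argument is bookkeeping of scalar coefficients.

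First I would expand the left-hand side $[Px,Py,Pz]=[(2R+\lambda)x,(2R+\lambda)y,(2R+\lambda)z]$ into the eight brackets
$8[Rx,Ry,Rz]$, the three brackets of type $4\lambda[Rx,Ry,z]$ (summed cyclically), the three brackets $2\lambda^{2}[Rx,y,z]$ (summed cyclically), and $\lambda^{3}[x,y,z]$. Next I would write out the right-hand side
\[
P\bigl([Px,Py,z]+[x,Py,Pz]+[Px,y,Pz]+\lambda^{2}[x,y,z]\bigr)-\lambda^{2}\bigl([Px,y,z]+[x,Py,z]+[x,y,Pz]\bigr)
\]
of the modified Rota-Baxter equation of weight $-\lambda^{2}$ for $P$. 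Each inner bracket like $[Px,Py,z]$ expands into four terms of the shape $4[Rx,Ry,z]+2\lambda[Rx,y,z]+2\lambda[x,Ry,z]+\lambda^{2}[x,y,z]$, and $P$ itself contributes a factor $2R+\lambda\,\mathrm{id}$ on the outside.

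After collecting like terms, I expect the coefficients of $\lambda\sum[Rx,Ry,z]$, of $\lambda^{2}\sum[Rx,y,z]$, and of $\lambda^{3}[x,y,z]$ on the two sides to coincide automatically, so those terms cancel. The remaining identity, after dividing through by $8$, should reduce exactly to
\[
[Rx,Ry,Rz]=R\bigl([Rx,Ry,z]+[x,Ry,Rz]+[Rx,y,Rz]+\lambda[Rx,y,z]+\lambda[x,Ry,z]+\lambda[x,y,Rz]+\lambda^{2}[x,y,z]\bigr),
\]
which is the Rota-Baxter condition of weight $\lambda$ for $R$. Since every manipulation is a reversible linear rearrangement, this gives both implications at once.

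The only real obstacle is arithmetic hygiene: one must carefully track the contribution $\lambda^{2}[x,y,z]$ inside $P(\,\cdots\,)$ (which, after applying $P$, produces both $2\lambda^{2}R[x,y,z]$ and $\lambda^{3}[x,y,z]$) so that it cancels against the separate $-\lambda^{2}\sum[Px,y,z]$ correction term. Once this bookkeeping is in place the equivalence drops out, establishing the lemma.
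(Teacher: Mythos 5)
\noindent Your proposal is correct and follows essentially the same route as the paper: expand $[(2R+\lambda\,\mathrm{id})x,(2R+\lambda\,\mathrm{id})y,(2R+\lambda\,\mathrm{id})z]$ by trilinearity, observe that the $4\lambda$-, $2\lambda^2$- and $\lambda^3$-terms match those produced by the modified Rota--Baxter expression of weight $-\lambda^2$, and reduce the residual identity (after dividing by $8$) to the weight-$\lambda$ Rota--Baxter condition, with reversibility of the linear rearrangement giving both directions. The bookkeeping you outline checks out.
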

\begin{proof}
For any $x, y, z\in T$, we have
\begin{eqnarray*}
&& [(2R+\lambda \mathrm{id})(x),  (2R+\lambda \mathrm{id})(y), (2R+\lambda \mathrm{id})(z)]\\
&=&[2Rx+\lambda x,  2Ry+\lambda y, 2Rz+\lambda z]\\
&=& 8[Rx, Ry, Rz]+4\lambda[Rx, Ry, z]+4\lambda[R(x), y, Rz]+4\lambda[x, Ry, Rz]\\
&&+2\lambda^2[x, y, Rz]+2\lambda^2[x, Ry, z]+2\lambda^2[Rx, y, z]+\lambda^3[x, y, z]\\
&=& (2R+\lambda \mathrm{id})( [(2R+\lambda \mathrm{id})x, (2R+\lambda \mathrm{id})y, z]+[x, (2R+\lambda \mathrm{id})y, (2R+\lambda \mathrm{id})z]+[(2R+\lambda \mathrm{id})x, y, (2R+\lambda \mathrm{id})z]\\
&&+\lambda^2[x, y, z])-\lambda^2 [(2R+\lambda \mathrm{id})x, y, z]-\lambda^2[x, (2R+\lambda \mathrm{id})y, z]-\lambda^2[x, y, (2R+\lambda \mathrm{id})z].
\end{eqnarray*}
\end{proof}
\begin{proposition}
Let $(T , [\c, \c, \c])$ be a   Lie triple system and  $N: T \rightarrow T$ be a linear operator. Then\\
(a) If $N^2 = 0$,  then $N$ is a Nijenhuis operator if and only if $N$ is a Rota-Baxter operator.\\
(b) If $N^2$ = $N$,  then $N$ is a Nijenhuis operator if and only if $N$ is a Rota-Baxter operator
of weight -1.\\
(c) If $N^2 = \pm\mathrm{id}$,  then $N$ is a Nijenhuis operator if and only if $N$ is a modified Rota-Baxter operator of weight $\mp1$.
\end{proposition}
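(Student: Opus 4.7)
The plan is to rewrite the Nijenhuis identity by distributing the outer $N$, so that the defining relation becomes
\[
[Nx,Ny,Nz] = N[Nx,Ny,z] + N[x,Ny,Nz] + N[Nx,y,Nz] - N^2\big([Nx,y,z]+[x,Ny,z]+[x,y,Nz]\big) + N^3[x,y,z].
\]
Each of the three parts then reduces to substituting the hypothesis on $N^2$ (and the consequent value of $N^3$) into this expanded form and matching the resulting identity against the definition of a Rota-Baxter (respectively modified Rota-Baxter) operator of the claimed weight. Because the substitution produces a single equation that simultaneously encodes both sides of the equivalence, the biconditional statements in (a), (b), (c) all follow at once, with no separate argument needed for each direction of the ``if and only if''.

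Concretely, in (a) the hypothesis $N^2 = 0$ forces $N^3 = 0$, and the expanded identity collapses to the Rota-Baxter identity of weight $0$. In (b), $N^2 = N$ implies $N^3 = N$, so the three $N^2$-terms become $-N[Nx,y,z]-N[x,Ny,z]-N[x,y,Nz]$ and $N^3[x,y,z] = N[x,y,z]$; factoring $N$ out of all terms reproduces the Rota-Baxter identity of weight $-1$. In (c), $N^2 = \pm \mathrm{id}$ implies $N^3 = \pm N$, so the $N^2$-terms become $\mp\big([Nx,y,z]+[x,Ny,z]+[x,y,Nz]\big)$ and $N^3[x,y,z] = \pm N[x,y,z]$; regrouping then matches the modified Rota-Baxter identity with weight $\mp 1$.

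No genuine obstacle is anticipated: the entire argument is a bookkeeping exercise once the outer $N$ is distributed. The only delicate point is the sign tracking in case (c), where one must be careful to identify the weight as $\mp 1$, opposite in sign to the $\pm$ appearing in $N^2 = \pm \mathrm{id}$; this is exactly what the proposition asserts.
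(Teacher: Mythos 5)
Your proposal is correct and follows essentially the same route as the paper: expand the outer $N$ in the Nijenhuis identity, substitute the hypothesis on $N^2$ (and the induced value of $N^3$), and observe that the resulting equation is literally the (modified) Rota--Baxter identity of the claimed weight, which settles both directions at once. The sign bookkeeping in case (c) checks out, so nothing further is needed.
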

\begin{proof}
(a)  Let $N^2 = 0$. Suppose $N$ is a Nijenhuis operator. Then for any $x, y, z \in T$,  we have
\begin{eqnarray*}
&& [Nx, Ny, Nz]=N\big( [Nx, Ny, z]+[x, Ny, Nz]+[Nx, y, Nz]\\
&&\ \ \ \ \ \ \ \ \ \ \ \ \ \ \ \ \ \ \ \ \ \  \ \  -N([Nx, y, z]+[x, Ny, z]+[x, y, Nz]-N[x, y, z])\big)\\
&&\ \ \ \ \ \ \ \ \ \ \ \ \ \ \ \ \ \ \ \ \ \  \ =N( [Nx, Ny, z]+[x, Ny, Nz]+[Nx, y, Nz]).
\end{eqnarray*}
Hence, $N$ is a Rota-Baxter operator. Proof of the converse part is similar.\\
(b)  Let $N^2 = N$. Suppose $N$ is a Nijenhuis operator. Then for any $x, y, z \in T$,  we have
\begin{eqnarray*}
&& [Nx, Ny, Nz]\\
&=&N\big( [Nx, Ny, z]+[x, Ny, Nz]+[Nx, y, Nz]\\
&&  -N([Nx, y, z]+[x, Ny, z]+[x, y, Nz]-N[x, y, z])\big)\\
&=&N( [Nx, Ny, z]+[x, Ny, Nz]+[Nx, y, Nz]-[Nx, y, z]-[x, Ny, z]-[x, y, Nz]+[x, y, z]).
\end{eqnarray*}
Hence, $N$ is a Rota-Baxter operator of weight -1. Similarly, the converse can be
shown.\\
(c) Let $N^2 =\mathrm{id}$. Suppose $N$ is a Nijenhuis operator. Then, for any $x, y, z \in T$,  we have
\begin{eqnarray*}
&& [Nx, Ny, Nz]\\
&=&N\big( [Nx, Ny, z]+[x, Ny, Nz]+[Nx, y, Nz]\\
&&  -N([Nx, y, z]+[x, Ny, z]+[x, y, Nz]-N[x, y, z])\big)\\
&=&N( [Nx, Ny, z]+[x, Ny, Nz]+[Nx, y, Nz]+[x, y, z])-[Nx, y, z]-[x, Ny, z]-[x, y, Nz].
\end{eqnarray*}
Hence, $N$ is a modified Rota-Baxter operator of weight -1.  In a similar way the
other cases can be shown.
\end{proof}

\begin{definition}
 Let $(T_N, [\c, \c, \c])$ be a Nijenhuis Lie triple system. A representation of $T_N$ is a
triple $(V, \theta, N_V )$, where $(V, \theta)$ is a representation of the Lie triple system
and $N_V : V \rightarrow V$ is a linear map satisfying the following condition
\begin{eqnarray}
\theta(Nx, Ny)N_V &=& N_V\big(\theta(N(x), N(y))+\theta(N(x), y)N_V +\theta(x, N(y))N_V \nonumber\\
&&- N_V \theta(N(x), y)-N_V \theta(x, N(y))-N_V \theta(x, y) N_V+N_V^2\theta(x, y)\big),
\end{eqnarray}
for any $x, y\in T$.
\end{definition}
\begin{proposition}
Let $(T_N, [\c, \c, \c])$ be a Nijenhuis Lie triple system and $(V, \theta, N_V )$  be a representation over it. Define a  map $\theta_N:\wedge^2 T \rightarrow \mathfrak{gl}(V)$ by
\begin{eqnarray*}
\theta_N(x,y):=\theta(N(x), N(y))-N_V(\theta(N(x), y) +\theta(x,N( y))-N_V \circ \theta(x,y)),
\end{eqnarray*}
Also,
\begin{eqnarray*}
D_N(x,y):=\theta_N(y, x)-\theta_N(x,y)=D(N(x), N(y))-N_V(D(N(x), y) +D(x,N( y))-N_V \circ D(x,y)),
\end{eqnarray*}
for any $x, y\in T$.
\end{proposition}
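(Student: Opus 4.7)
The plan is a direct algebraic verification using only the definition $D(a,b) = \theta(b,a) - \theta(a,b)$; no Lie triple identity or Nijenhuis identity is needed, since the statement is a purely formal consequence of skew-symmetrizing the first two arguments of $\theta$ inside the definition of $\theta_N$. In particular, the fact that $(V,\theta,N_V)$ is a representation of $T_N$ plays no role here.

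First I would substitute the defining formula for $\theta_N$ at $(y,x)$ and at $(x,y)$ and compute the difference term by term. To keep the bookkeeping manageable, I would organize the resulting expression by the number of outer copies of $N_V$ (zero, one, or two), giving three separate sums to simplify. The zero-$N_V$ block reduces immediately to $\theta(Ny, Nx) - \theta(Nx, Ny) = D(Nx, Ny)$. The two-$N_V$ block yields $N_V^2\bigl(\theta(y,x) - \theta(x,y)\bigr) = N_V^2\, D(x,y)$.

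The one-$N_V$ block is where the only care is required. Four terms arise, and they must be paired as $\{\theta(Ny,x),\theta(x,Ny)\}$ and $\{\theta(y,Nx),\theta(Nx,y)\}$ so that each pair matches the pattern $\theta(b,a)-\theta(a,b)$. With this pairing the block collapses to $-N_V\bigl(D(x, Ny) + D(Nx, y)\bigr)$. Summing the three blocks and then pulling the $N_V^2\, D(x,y)$ contribution inside the outer $N_V$-bracket in the obvious way produces exactly $D(Nx,Ny) - N_V\bigl(D(Nx,y) + D(x,Ny) - N_V\circ D(x,y)\bigr)$.

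The main—and only—potential obstacle is the pairing step in the one-$N_V$ block: a naive pairing that groups $(Ny,x)$ with $(Nx,y)$ rather than with $(x,Ny)$ would produce a shape that does not fit the definition of $D$, and a sign slip there would spoil the target formula. Beyond this piece of careful reindexing, every step is mechanical.
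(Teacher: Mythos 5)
Your verification of the displayed identity for $D_N$ is correct: expanding $\theta_N(y,x)-\theta_N(x,y)$ and pairing $\theta(Ny,x)$ with $\theta(x,Ny)$ and $\theta(y,Nx)$ with $\theta(Nx,y)$ does give $D(Nx,Ny)-N_V\bigl(D(Nx,y)+D(x,Ny)-N_V\circ D(x,y)\bigr)$, and indeed no hypothesis beyond bilinearity is used for that. But this is only the trivial part of the proposition, and your explicit remark that ``the fact that $(V,\theta,N_V)$ is a representation of $T_N$ plays no role here'' is the symptom of the gap: if that were all the proposition claimed, it would be a definition rather than a proposition. The actual content — as the paper's own proof and the way the result is invoked at the start of Section 4 make clear — is that $(V,\theta_N,N_V)$ is a \emph{representation of the induced Nijenhuis Lie triple system} $(T,[\cdot,\cdot,\cdot]_N)$.

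Concretely, the paper's proof has two parts that your proposal omits entirely. First, one must check that $\theta_N$ satisfies the two Yamaguti representation identities (2.1)--(2.2) with respect to the deformed bracket $[\cdot,\cdot,\cdot]_N$ (the paper waves at this as a ``tedious computation,'' but it is a genuine claim requiring the representation axioms for $\theta$ and the Nijenhuis identity for $N$). Second, and this is the computation the paper actually writes out, one must verify the Nijenhuis compatibility condition (3.1) for the new pair, namely
\begin{align*}
\theta_N(Nx,Ny)N_V
&= N_V\bigl(\theta_N(Nx,Ny)+\theta_N(Nx,y)N_V+\theta_N(x,Ny)N_V\\
&\quad\ -N_V\theta_N(Nx,y)-N_V\theta_N(x,Ny)-N_V\theta_N(x,y)N_V+N_V^2\theta_N(x,y)\bigr),
\end{align*}
which is obtained by expanding $\theta_N$ in terms of $\theta$ and repeatedly applying condition (3.1) for the original representation. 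Both steps genuinely use the hypothesis that $(V,\theta,N_V)$ is a representation of $T_N$, so your proposal proves only the formal reindexing identity and leaves the substance of the proposition unaddressed.
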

\begin{proof}
One can show that  $\theta_N$  defines a representation of the Lie triple system $(T_N ,[\cdot, \cdot, \cdot])$ on $V$ directly by a tedious computation.   Moreover, for any $x, y\in T$ and $u\in V$,  we have
\begin{eqnarray*}
&& \theta_N(N(x),N(y))N_{V}(u)\\
&=& \theta(N^2(x), N^2(y))N_{V}(u)-N_V(\theta(N^2(x),N(y)) N_{V}(u) +\theta(N(x),N^2(y))N_{V}(u)-N_V\circ \theta(N(x),N(y))N_{V}(u))\\
&=& N_V\big(\theta(N^2(x), N^2(y)) u +\theta(N^2(x), N(y))N_V(u) +\theta(N(x), N^2(y))N_V(u) \\
&&-N_V \theta(N^2(x), N(y)) u-N_V \theta(N(x), N^2(y))u-N_V \theta(N(x), N(y)) N_V(u)+N_V^2\theta(N(x), N(y))u\big)\\
&&-N^2_V\big(\theta(N^2(x), N(y)) u +\theta(N^2(x), y)N_V(u) +\theta(N(x), N(y))N_V(u)-N_V \theta(N^2(x), y) u\\
&&-N_V \theta(N(x), N(y)) u-N_V \theta(N(x), y) N_V(u)+N_V^2\theta(N(x), y)u+\theta(N(x), N^2(y)) u  \\
&&+\theta(N(x), N(y))N_V(u)+\theta(x, N^2(y))N_V(u)-N_V\theta(N(x), N(y)) u-N_V \theta(x, N^2(y)) u\\
&& -N_V \theta(x, N(y)) N_V(u)+N_V^2\theta(x, N(y))+\theta(N(x), N(y)) u -N_V(\theta(N(x), y)N_V(u) +\theta(x, N(y))N_V(u) \\
&&-N_V \theta(N(x), y) u-N_V \theta(x, N(y)) u-N_V\theta(x, y) N_V(u)+N_V^2\theta(x, y)u\big)\\
&=& N_V\big(\theta_N(N(x), N(y)) u +\theta_N(N(x), y)N_V(u) +\theta_N(x, N(y))N_V(u)\\
&&-N_V \theta_N(N(x), y) u-N_V \theta_N(x, N(y)) u-N_V \theta_N(x, y) N_V(u)+N_V^2\theta_N(x, y)u)\big),
\end{eqnarray*}
which shows $(V, \theta_N, N_V)$ is a representation of the  Nijenhuis Lie triple system $(T_N , [\cdot, \cdot, \cdot])$.
\end{proof}
\bigskip

\section{ Cohomology of Nijenhuis Lie triple systems}\label{chap-4}
\def\theequation{\arabic{section}.\arabic{equation}}
\setcounter{equation} {0}

In this section,  we define a cohomology theory for Nijenhuis Lie triple systems. This cohomology theory involves studying the behavior of Nijenhuis operators with coefficients in a suitable representation.

Let $(T_N, [\c, \c, \c])$ be a Nijenhuis Lie triple system with representation $(V, \theta, N_V )$. Now by Proposition 3.3 and 3.10, we get a new Nijenhuis  Lie triple system $(T_N , [\c, \c, \c]_N)$ with representation
$(V, \theta_N, N_V)$ induced by the Nijenhuis operator $N$. We consider the Yamaguti cochain complex of this induced Lie triple system $(T, [\c, \c, \c]_N)$ with representation $(V, \theta_N, N_V)$ as follows:

For any $n \geq 0$, define cochain groups $C^{2n+1}_{\mathrm{NO}}(T, V ):$= Hom$(T^{\otimes 2n+1}, V)$, For any $f \in C^{2n+1}_{\mathrm{NO}}(T, V)$ satisfies
\begin{eqnarray*}
&&f(x_1, \ldots, x_{2n-2},  x, x, y)=0,\\
&& f(x_1,\ldots, x_{2n-2}, x, y, z)+f(x_1,\ldots, x_{2n-2},  y, z, x)+f(x_1,\ldots, x_{2n-2}, z, x, y)=0,
\end{eqnarray*}

The coboundary operator $\partial^{2n-1}: C^{2n-1}_{\mathrm{NO}}(T, V) \rightarrow C^{2n+1}_{\mathrm{NO}}(T, V)$, $n\geqslant 1$ is given by
\begin{align*}
&\partial^{2n-1} f(x_1,\ldots,x_{2n+1})\\
&=\theta_N(x_{2n}, x_{2n+1})f(x_1,x_2,\ldots,x_{2n-1})-\theta_N( x_{2n-1}, x_{2n+1})f(x_1,x_2,\ldots,x_{2n-2}, x_{2n})\\
&\ \ \ \ \ \ +\sum_{i=1}^n(-1)^{i+n}D_N(x_{2i-1}, x_{2i}) f(x_1,\ldots,\widehat{x_{2i-1}},\widehat{x_{2i}},\ldots, x_{2n+1})\\
&\ \ \ \ \ \ +\sum_{i=1}^n\sum_{j=2i+1}^{2n+1}(-1)^{n+i+1} f(x_1,\ldots,\widehat{x_{2i-1}},\widehat{x_{2i}},\ldots, [x_{2i-1}, x_{2i}, x_j]_{N},\ldots, x_{2n+1}),
\end{align*}
for any $x_1, x_2, \ldots, x_{2n+1} \in T, f\in C^{2n-1}_{\mathrm{NO}}(T, V)$,  where $\widehat{}$ denotes omission.

The above map satisfies the condition $\partial^{2n+1}\circ \partial^{2n-1}= 0$ as it a coboundary map for the induced
Lie triple system $(T, [\c, \c, \c]_N)$. Therefore, $(C^{2n-1}_{\mathrm{NO}}(T, V ), \partial^{2n-1})$ is a cochain complex. This cochain
complex is called the cochain complex of the Nijenhuis operator $N$ and the corresponding
cohomology groups are called the cohomology of Nijenhuis operator $N$ with coefficients in
the representation $V$ and is denoted by $H^\ast_{\mathrm{NO}}(T, V).$

\begin{definition}
Let $(T_N , [\c, \c, \c])$ be a Nijenhuis Lie triple system with a representation $(V, \theta, N_V)$,  for $n \geq 1$,
we define a map $\phi^{2n-1}: C^{2n-1}(T, V ) \rightarrow C^{2n-1}_{\mathrm{NO}}(T, V)$  by
\begin{align*}
&\phi^{2n-1}(f)(x_1,x_2,\ldots,x_{2n+1})\\
&=f(N(x_1),N(x_2),\ldots,N(x_{2n+1}))-\sum_{i=1}^{2n-1} (N_V \circ f)(N(x_1),N(x_2),\ldots,N(x_{i-1}),x_i,N(x_{i+1}),\ldots,N(x_{2n-1}))\\
&+\sum_{i,j=1}^{2n-1}(N_V^2 \circ  f)(N(x_1),N(x_2),\ldots,N(x_{i-1}),x_i,N(x_{i+1}),\ldots, N(x_{j-1}),x_j,N(x_{j+1}),\ldots,N(x_{2n-1}))
\\&-\ldots+(-1)^{2n-1}(N_V^{2n-1} \circ f)(x_1,x_2,\ldots,x_{2n-1})
\end{align*}
\end{definition}
\begin{lemma}
We have
\begin{eqnarray*}
\partial^{2n-1}(\phi^{2n-1}(f))(x_1, x_2, x_3,\cdots, x_{2n+1}) = \phi^{2n+1} ( \delta^{2n-1}(f))(x_1, x_2, x_3, \cdots, x_{2n+1}),
\end{eqnarray*}
for any $f\in C^{2n-1}$(T, V ) and $x_1, \cdots, x_{2n+1}\in T$.
\end{lemma}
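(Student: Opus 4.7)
The goal is to establish the chain-map property $\partial^{2n-1}\circ \phi^{2n-1}=\phi^{2n+1}\circ \delta^{2n-1}$, i.e.\ that $\phi^\bullet$ is a morphism of cochain complexes from $(C^\bullet(T,V),\delta^\bullet)$ to $(C^\bullet_{\mathrm{NO}}(T,V),\partial^\bullet)$. Since the formulas for $\partial$ and $\delta$ have identical structure---two outer $\theta$-type terms, an alternating sum involving $D$, and a sum involving the ternary bracket, with $\theta, D, [\cdot,\cdot,\cdot]$ on one side and their $N$-twists $\theta_N, D_N, [\cdot,\cdot,\cdot]_N$ on the other---the plan is to expand both sides into sums of monomials of the common form $(-N_V)^{a}\circ f(\ldots,N(x_{i_1}),\ldots,x_{j_1},\ldots)$, indexed by subsets of $\{1,\ldots,2n+1\}$ indicating which arguments carry $N$, and then compare coefficients.

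First I would expand the left-hand side in two stages: substitute the defining formulas for $\theta_N, D_N, [\cdot,\cdot,\cdot]_N$ (from Propositions~3.3 and~3.10 together with the definition of a Nijenhuis operator) into $\partial^{2n-1}$, and then insert the subset-sum definition of $\phi^{2n-1}(f)$ into every inner occurrence. Next I would expand the right-hand side symmetrically, first writing $\phi^{2n+1}$ as a subset-sum over $\{1,\ldots,2n+1\}$ and then expanding $\delta^{2n-1}(f)$ in each selected monomial. After these expansions both sides become finite sums of monomials of the prescribed form, and the matching step organizes them by the pair consisting of the subset $S$ of indices carrying $N$ and the outside power of $-N_V$. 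The algebraic inputs that force the coefficients to agree are the representation-compatibility identity from Definition~3.9, which lets $N_V$ commute past $\theta$ up to lower-$N_V$ corrections, together with the defining formulas for $\theta_N, D_N, [\cdot,\cdot,\cdot]_N$, which are precisely designed to encode this twist.

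The main obstacle will be the bracket sum $\sum_{i,j}\pm f(\ldots,[x_{2i-1},x_{2i},x_j]_N,\ldots)$: the seven-term expansion of $[\cdot,\cdot,\cdot]_N$ interacts with the subset-sum defining $\phi$ to produce a large number of cross-terms that must be rearranged to recover the bracket part of $\phi^{2n+1}(\delta^{2n-1}(f))$. To avoid grinding through these cross-terms, I would present the proof as a conjugation statement: the Yamaguti differential for $(T,[\cdot,\cdot,\cdot]_N)$ with coefficients in $(V,\theta_N,N_V)$ is, term-for-term, the $\phi$-conjugate of the Yamaguti differential for $(T,[\cdot,\cdot,\cdot])$ with coefficients in $(V,\theta,N_V)$. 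By additivity of $\partial$ and $\delta$, this conjugation decomposes into three independent identities---one each for the $\theta$-, $D$-, and bracket-type constituents---which follow directly from the defining formulas; assembling them then yields the lemma.
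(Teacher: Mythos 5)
The paper's own ``proof'' of this lemma is the single sentence ``Direct computation gives the result,'' so there is no written argument to compare yours against; the only question is whether your plan would actually close the computation, and it would not. The gap sits exactly at the point you identify as ``the main obstacle'' and then dispose of by assertion: the claim that the chain-map identity splits into three \emph{independent} identities for the $\theta$-, $D$- and bracket-type constituents, each following ``directly from the defining formulas.'' The $\theta$- and $D$-constituents do match separately --- condition (3.1) is equivalent to $\theta_N(x,y)\circ N_V=N_V\circ\theta_N(x,y)$, and that commutation is precisely what is needed there --- but the bracket constituent does not. Expanding $\phi^{2n-1}(f)\bigl(\ldots,[x_{2i-1},x_{2i},x_j]_N,\ldots\bigr)$ against the corresponding terms of $\phi^{2n+1}(\delta^{2n-1}f)$, the seven summands of $[\cdot,\cdot,\cdot]_N$ pair off with the four $N$-decorations of the bracket slot except in the last case: one is left comparing $N_V^{k+1}f(\ldots,N[x_{2i-1},x_{2i},x_j],\ldots)$ with $N_V^{k+2}f(\ldots,[x_{2i-1},x_{2i},x_j],\ldots)$, and these are not equal because $f$ is an arbitrary cochain with no intertwining relation between $N$ and $N_V$. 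Since the $\theta$- and $D$-parts already cancel on their own, there is nothing left over to absorb this defect, so the ``assembling'' step of your argument cannot be carried out.

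Worse, the defect is not merely a failure of your decomposition but of the stated identity itself if the paper's definitions are taken literally. Take $N=0$ and $N_V=\lambda\,\mathrm{id}_V$ with $\lambda\neq 0$: this is a Nijenhuis Lie triple system with an admissible representation (condition (3.1) holds trivially), and then $[\cdot,\cdot,\cdot]_N=0$, $\theta_N=\lambda^2\theta$, $D_N=\lambda^2 D$, while $\phi^{2n-1}=(-1)^{2n-1}\lambda^{2n-1}\,\mathrm{id}$ because every term of the subset sum containing an $N(x_i)$ vanishes. For $n=1$ a one-line computation gives
\begin{equation*}
\partial^{1}(\phi^{1}f)(x_1,x_2,x_3)-\phi^{3}(\delta^{1}f)(x_1,x_2,x_3)=-\lambda^{3}f([x_1,x_2,x_3]),
\end{equation*}
which is nonzero for, e.g., the adjoint representation of the Lie triple system of Example 3.4 with $f=\mathrm{id}$. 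So before organizing the bookkeeping you would first need to repair either the definition of $\phi^{2n-1}$ or the target complex; as written, no rearrangement of the cross-terms will make the two sides agree, and a correct treatment must confront the bracket sum head-on rather than appeal to term-by-term conjugation.
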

\begin{proof}
Direct computation gives the result.
\end{proof}

\begin{definition}
Let $(T_N , [\c, \c, \c])$ be a Nijenhuis Lie triple system with a representation $(V, \theta, N_V)$. Define
\begin{eqnarray*}
C^1_{\mathrm{NLie}}(T, V ) := C^1(T, V ) \ \ \mathrm{and}\ \ C^{2n+1}_{\mathrm{NLie}}(T, V ) := C^{2n+1}(T, V ) \oplus C^{2n-1}_{\mathrm{NO}} (T, V ), \forall n \geq  1.
\end{eqnarray*}
For $n\geq 2$,  define a map $d^{2n-1}: C^{2n-1}_{\mathrm{NLie}}(T, V )\rightarrow C^{2n+1}_{\mathrm{NLie}}(T, V )$ by
\begin{eqnarray*}
d^{2n-1}(f, g) = (\delta^{2n-1} f, \partial^{2n-3} g + (-1)^n\phi^{2n-1} f),\ \ \ \ \forall(f, g) \in C^{2n-1}_{\mathrm{NLie}} (T, V ),
\end{eqnarray*}
and for $n = 1$, define $d^1: C^{1}_{\mathrm{NLie}}(T, V )\rightarrow C^{3}_{\mathrm{NLie}}(T, V )$ by
\begin{eqnarray*}
d^1(f) = (\delta^1 f, \phi^1 f),\ \ \ \ \forall f \in C^{1}_{\mathrm{NLie}} (T, V ).
\end{eqnarray*}
\end{definition}
\begin{theorem}
The map $d^{2n-1}: C^{2n-1}_{\mathrm{NLie}}(T, V )\rightarrow C^{2n+1}_{\mathrm{NLie}}(T, V )$  satisfies $d^{2n+1}\circ d^{2n-1} = 0$.
\end{theorem}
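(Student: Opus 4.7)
The plan is to verify $d^{2n+1}\circ d^{2n-1}=0$ componentwise with respect to the direct sum decomposition $C^{2n+1}_{\mathrm{NLie}}(T,V)=C^{2n+1}(T,V)\oplus C^{2n-1}_{\mathrm{NO}}(T,V)$. Everything will follow by (i) the fact that $\delta$ is the Yamaguti coboundary for the Lie triple system $(T,[\cdot,\cdot,\cdot])$ with representation $(V,\theta)$, (ii) the fact that $\partial$ is the Yamaguti coboundary for the induced Lie triple system $(T,[\cdot,\cdot,\cdot]_N)$ with representation $(V,\theta_N,N_V)$ (the text explicitly records that $(C^\ast_{\mathrm{NO}}(T,V),\partial)$ is a cochain complex), and (iii) the compatibility identity from Lemma 4.2.

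For $n\geq 2$ I would take $(f,g)\in C^{2n-1}_{\mathrm{NLie}}(T,V)$ and unwind the definition to compute
\[
d^{2n+1}(d^{2n-1}(f,g)) = \bigl(\delta^{2n+1}\delta^{2n-1}f,\; \partial^{2n-1}\partial^{2n-3}g + (-1)^n\partial^{2n-1}\phi^{2n-1}f + (-1)^{n+1}\phi^{2n+1}\delta^{2n-1}f\bigr).
\]
The first entry vanishes because $\delta\circ\delta=0$ on the Yamaguti complex of $(T,[\cdot,\cdot,\cdot])$. In the second entry, the term $\partial^{2n-1}\partial^{2n-3}g$ vanishes because $\partial\circ\partial=0$ on the Yamaguti complex of the induced system $(T,[\cdot,\cdot,\cdot]_N)$. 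The remaining two terms regroup as $(-1)^n\bigl(\partial^{2n-1}\phi^{2n-1}f-\phi^{2n+1}\delta^{2n-1}f\bigr)$, which is zero by Lemma 4.2. The sign $(-1)^n$ in the definition of $d^{2n-1}$ is precisely what is needed to force this cancellation.

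For the base case $n=1$ I would repeat the same computation with $d^1(f)=(\delta^1 f,\phi^1 f)$ followed by $d^3$: the first slot of $d^3\circ d^1(f)$ is again $\delta^3\delta^1 f=0$, and the second slot reduces, via Lemma 4.2, to a multiple of $\partial^1\phi^1 f-\phi^3\delta^1 f=0$, matching the sign convention of Definition 4.3.

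The only genuine content beyond bookkeeping is Lemma 4.2, which is already in hand; once that intertwining identity $\partial\phi=\phi\delta$ is available, the main obstacle is simply keeping the signs $(-1)^n$ and $(-1)^{n+1}$ straight so that the two cross-terms cancel rather than add. Everything else is an immediate consequence of the two cochain complexes $(C^\ast(T,V),\delta)$ and $(C^\ast_{\mathrm{NO}}(T,V),\partial)$ being complexes in their own right.
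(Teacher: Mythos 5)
Your proposal is correct and follows essentially the same route as the paper's own proof: split into components, kill the first component with $\delta\circ\delta=0$, kill $\partial^{2n-1}\partial^{2n-3}g$ with $\partial\circ\partial=0$, and cancel the two cross-terms using the intertwining identity $\partial^{2n-1}\phi^{2n-1}=\phi^{2n+1}\delta^{2n-1}$ of Lemma 4.2. One caveat worth recording: with Definition 4.3 taken literally, the $n=1$ case yields $\partial^{1}\phi^{1}f+\phi^{3}\delta^{1}f$ (a sum, not the difference you state), so either $d^{1}(f)=(\delta^{1}f,-\phi^{1}f)$ is intended or the sign in $d^{3}$ must be read accordingly; this is a sign inconsistency in the stated conventions rather than a flaw in your strategy, but your claim that the second slot ``reduces to a multiple of $\partial^{1}\phi^{1}f-\phi^{3}\delta^{1}f$'' does not follow verbatim from the definitions as written.
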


\begin{proof}
For $n\geq 1$ and $(f, g) \in C^{2n-1}_{\mathrm{NLie}} (T, V )$, we have
\begin{eqnarray*}
d^{2n+1}\circ d^{2n-1} (f, g)&=& d^{2n+1}\big( \delta^{2n-1} f, \partial^{2n-3} g + (-1)^n\phi^{2n-1} f\big)\\
&=& \big(\delta^{2n+1}\delta^{2n-1} f,  \partial^{2n-1}(\partial^{2n-3} g + (-1)^n\phi^{2n-1} f) +(-1)^n  \phi^{2n+1} \circ \delta^{2n-1} f  \big)\\
&=& (0, 0).
\end{eqnarray*}

\end{proof}

Denote the cohomology group of this cochain complex by $H^{\ast}_{\mathrm{NLie}} (T, V)$, which is called
the cohomology of the Nijenhuis Lie triple system $T_N$ with coefficients in the representation $(V, \theta, N_V)$.

\bigskip

\section{Abelian extension of Nijenhuis Lie triple systems}\label{chap-5}

 In this section, we study abelian extensions of Nijenhuis Lie triple systems in terms of cohomology groups.

Let $(T_N, [\c, \c, \c])$ be a Nijenhuis Lie triple system, for any linear
operator $N_V$ on $V$ with the trivial bracket $\mu$ defined by $\mu : V \times V \times V \rightarrow V$ by $\mu(x, y, z) = 0$ for
all $x, y,  z\in V$ makes $(V_{N_V}, \mu)$ a Nijenhuis Lie triple system.

\begin{definition} An abelian extension  of $(T_N, [\c, \c, \c])$ by $(V_{N_V}, \mu)$  is an exact sequence of  morphisms of Nijenhuis Lie triple system
$$
\xymatrix@C=0.5cm{
  0 \ar[r] & (V_{N_V}, \mu) \ar[rr]^{i} && (\hat{T}_{\hat{N}}, [\c, \c, \c]_\wedge) \ar[rr]^{p} && (T_N,  [\c, \c, \c]) \ar[r] & 0 }
$$
where $V_{N_V}$ is an abelian ideal of $\hat{T}_{\hat{N}}$, i.e., $[V_{N_V}, V_{N_V}, \hat{T}_{\hat{N}}]_\wedge = [V_{N_V}, \hat{T}_{\hat{N}}, V_{N_V}]_\wedge = [\hat{T}_{\hat{N}}, V_{N_V}, V_{N_V}]_\wedge= 0$.
\end{definition}
A section of an abelian extension $(\hat{T}_{\hat{N}}, [\c, \c, \c]_\wedge)$ of $(T_N,  [\c, \c, \c])$ by $(V_{N_V}, \mu)$ is
a linear map $s: T_N \rightarrow  \hat{T}_{\hat{N}}$ such that $ps = \mathrm{id}$.

 \begin{definition} Two abelian extensions $(\hat{T}_{\hat{N_1}}, [\c, \c, \c]_{\wedge_1})$ and $(\hat{T}_{\hat{N_2}}, [\c, \c, \c]_{\wedge_2})$ are said to be   isomorphic if there is an isomorphism of Nijenhuis Lie triple systems   $\eta: (\hat{T}_{\hat{N_1}}, [\c, \c, \c]_{\wedge_1})\rightarrow (\hat{T}_{\hat{N_2}}, [\c, \c, \c]_{\wedge_2})$ of LietsDer pairs that makes the following diagram commutative
\begin{eqnarray*}
\aligned
\xymatrix{
0  \ar[rr] && (V_{N_V}, \mu)  \ar[d]^{\mathrm{id}_{V_{N_V}}}\ar[rr]^{i} & & (\hat{T}_{\hat{N_1}}, [\c, \c, \c]_{\wedge_1}) \ar[d]^{\eta} \ar[rr]^{p} & & (T_N,  [\c, \c, \c]) \ar[d]^{\mathrm{id}_{T_N}} \ar[rr] & & 0 \\
0 \ar[rr]& & (V_{N_V}, \mu)  \ar[rr]^{i'} && (\hat{T}_{\hat{N_2}}, [\c, \c, \c]_{\wedge_2}) \ar[rr]^{q} & & (T_N,  [\c, \c, \c])  \ar[rr] & & 0}\\
\endaligned
\end{eqnarray*}
\end{definition}
Define a bilinear map $\theta: T_N\times T_N\rightarrow \mathfrak{gl}(V)$ by
\begin{eqnarray}
\widetilde{\theta}(x, y)u=[u, s(x), s(y)]_{\wedge},\ \ \ \forall x, y\in T_N, u\in V.
\end{eqnarray}
Also, we have
\begin{eqnarray*}
\widetilde{D}(x, y)u&=&\widetilde{\theta}(y, x)u-\widetilde{\theta}(x, y)u\\
&=&[u, s(y), s(x)]_{\wedge}-[u, s(x), s(y)]_{\wedge},\ \ \ \forall x, y\in T_N, u\in V.
\end{eqnarray*}
Since $V$ is an abelian ideal of of $\hat{T}_{\hat{N}}$ and $\hat{N}s(x)-sN(x)\in V$, for any $x_1, x_2, x_3, x_4\in T$ and $u\in V$,
we have $s([x_2, x_3, x_4]_\wedge)-[s(x_2), s(x_3), s(x_4)]_\wedge \in V$, which indicates that
\begin{eqnarray}
&&[u, s(x_1), s([x_2, x_3, x_4]_\wedge)]_{\wedge}=[u, s(x_1), [s(x_2), s(x_3), s(x_4)]_\wedge)]_{\wedge},\\
&&[N_V(u), sN(x_1), sN(x_2)]_\wedge=[N_V(u), \hat{N}s(x_1), \hat{N}s(x_2)]_\wedge.
\end{eqnarray}
\begin{theorem}
With the above notations, $(V, \widetilde{\theta}, N_V )$ is a representation of $(T_N, [ \c, \c, \c])$.
\end{theorem}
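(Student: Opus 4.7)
The plan is to verify the three defining conditions of a representation of a Nijenhuis Lie triple system for $(V,\widetilde{\theta},N_V)$: namely the two identities that make $(V,\widetilde{\theta})$ a representation of the Lie triple system $(T,[\cdot,\cdot,\cdot])$, together with the Nijenhuis compatibility condition (3.1). The guiding principle throughout is that these identities already hold inside $\hat T_{\hat N}$, as they are just the Lie triple and Nijenhuis axioms there, and the abelian ideal hypothesis on $V$ is strong enough to kill every error term that arises from the fact that $s$ is merely a linear section rather than a morphism.

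To verify the two representation identities I will start from the definition $\widetilde{\theta}(x,y)u=[u,s(x),s(y)]_\wedge$ and the derived formula for $\widetilde{D}$, and rewrite each composite $\widetilde{\theta}\cdot\widetilde{\theta}$, $\widetilde{D}\cdot\widetilde{\theta}$ or $\widetilde{\theta}\cdot\widetilde{D}$ as an iterated bracket in $\hat T_{\hat N}$. I then use identity (5.3) to replace $s([x_i,x_j,x_k])$ by $[s(x_i),s(x_j),s(x_k)]_\wedge$ inside any bracket whose first slot is $u\in V$: the correction $s([x_i,x_j,x_k])-[s(x_i),s(x_j),s(x_k)]_\wedge$ lies in $V$, and a bracket with two $V$-entries vanishes by the abelian ideal hypothesis, so the replacement is free of cost. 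Once all the ``mixed'' terms are purely iterated $\hat T_{\hat N}$-brackets of $u,s(x_1),\dots,s(x_4)$, both identities reduce to the fundamental five-term Lie triple identity in $\hat T_{\hat N}$.

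For the Nijenhuis compatibility (3.1) I will use that $V$ is $\hat N$-invariant with $\hat N|_V = N_V$, and expand $[\hat N s(x),\hat N s(y),\hat N u]_\wedge$ using the Nijenhuis identity for $\hat N$ applied to the triple $(s(x),s(y),u)$. The resulting seven-term expansion matches, term by term, the right-hand side of (3.1) once we use (5.4) to trade $\hat N s(x)$ and $\hat N s(y)$ for $sN(x)$ and $sN(y)$; the corrections $\hat N s(\cdot)-sN(\cdot)\in V$ again disappear on account of the abelian ideal property as soon as they sit in a bracket alongside another $V$-entry. Collecting the prefactors $N_V$ and $N_V^2$ then gives exactly the required relation on $\widetilde{\theta}$.

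The hardest part will be the bookkeeping in the Nijenhuis step, since (3.1) has seven terms with multiple compositions of $N_V$, while the Nijenhuis identity for $\hat N$ expands into many monomials once the three $\hat N s\leftrightarrow sN$ substitutions are performed in every slot. There is no genuine algebraic difficulty, however: every error introduced by the substitutions produces a bracket with at least two $V$-entries and is therefore annihilated by the abelian ideal condition, so the verification is essentially a careful, symbolic expansion rather than a conceptual obstruction.
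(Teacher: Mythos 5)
Your proposal is correct and follows essentially the same route as the paper: rewrite the composites of $\widetilde{\theta}$ as iterated brackets in $\hat{T}_{\hat{N}}$, absorb the defects of the section $s$ into $V$ where the abelian-ideal condition annihilates them, and obtain the compatibility (3.1) by applying the Nijenhuis identity for $\hat{N}$ together with $\hat{N}|_{V}=N_V$. The only slip is one of ordering: since $\widetilde{\theta}(x,y)u=[u,s(x),s(y)]_{\wedge}$, the Nijenhuis identity must be applied to the ordered triple $(u,s(x),s(y))$ rather than $(s(x),s(y),u)$, exactly as in the paper's displayed computation.
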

\begin{proof}
For any $x_1, x_2, x_3, x_4\in T$ and $u\in V$, we have
\begin{eqnarray*}
&& \widetilde{\theta}(x_3, x_4)\widetilde{\theta}(x_1, x_2)-\widetilde{\theta}(x_2, x_4)\widetilde{\theta}(x_1,x_3) - \widetilde{\theta}(x_1,[x_2, x_3, x_4]) + \widetilde{D}(x_2, x_3)\widetilde{\theta}(x_1,x_4)\\
&=& [[u, s(x_1), s(x_2)]_\wedge, s(x_3), s(x_4)]_\wedge-[[u, s(x_1), s(x_3)]_\wedge, s(x_2), s(x_4)]_\wedge\\
&& -[u, s(x_1), [ s(x_2), s(x_3), s(x_4)]_\wedge]_\wedge+[[u, s(x_1),s(x_4)]_\wedge, s(x_3), s(x_2)]_\wedge\\
&&-[[u, s(x_1),s(x_4)]_\wedge, s(x_2), s(x_3)]_\wedge\\
&=& 0,
\end{eqnarray*}
which yields that (2.1) holds, similarly, (2.2) holds. Further, we have
\begin{eqnarray*}
&& \widetilde{\theta}(Nx, Ny)N_V(u)\\
&=& [N_V(u), sN(x), sN(y)]_{\wedge}\\
&=& [N_V(u), \hat{N}s(x), \hat{N}s(y)]_\wedge\\
&=& N_v\big( [u, \hat{N}s(x), \hat{N}s(y)]_\wedge+[N_V(u), \hat{N}s(x), s(y)]_\wedge+[N_V(u), s(x), \hat{N}s(y)]_\wedge\\
&& -N_V([N_V(u), s(x), s(y)]_\wedge+[u, \hat{N}s(x), s(y)]_\wedge+[u, s(x), \hat{N}s(y)]_\wedge-N_V[u, s(x), s(y)]_\wedge)\big)\\
&=& N_V\big(\widetilde{\theta}(N(x), N(y)) u +\widetilde{\theta}(N(x), y)N_V(u) +\widetilde{\theta}(x, N(y))N_V(u)\\
&&-N_V \widetilde{\theta}(N(x), y) u-N_V \widetilde{\theta}(x, N(y)) u-N_V \widetilde{\theta}(x, y) N_V(u)+N_V^2\widetilde{\theta}(x, y)u\big),
\end{eqnarray*}
which indicates that (3.1) holds. Therefore, $(V, \widetilde{\theta}, N_V )$ is a representation of $(T_N, [ \c, \c, \c])$.
\end{proof}

For any section $s$, we define maps $\psi: T\o T\o T \rightarrow V$ and  $\chi: T\rightarrow V$ by
$$
\psi(x, y, z):=[s(x), s(y), s(z)]_{\wedge}-s([x, y, z]),~\chi(x)=\hat{N}s(x)-sN(x),~~\forall x, y, z\in T.
$$

For any $x, y, z\in T$ and $u, v, w\in V$, define a linear map $N_{\chi}: T\oplus V\rightarrow T\oplus V$ and a trilinear map $[\c, \c, \c]_{\psi}$ by
\begin{align*}
&[x+u, y+v, z+w]_{\psi}=[x, y, z]+\psi(x, y, z)+\theta(y, z)u-\theta(x, z)v+D(x, y)w, \\
& N_{\chi}(x+u)=N(x)+N_{V}(u)+\chi(x).
\end{align*}
\begin{proposition}  The pair $(T \oplus V, [\c, \c, \c]_\psi)$ is a Nijenhuis Lie triple system if and only if $(\psi, \chi)$ is a 3-cocycle in the cohomology of  the Nijenhuis Lie triple system $(T_N, [\c, \c, \c])$ with coefficients in the representation $(V, \theta, N_V )$.
\end{proposition}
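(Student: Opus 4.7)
The plan is to unpack the coboundary $d^3$ from Definition 4.3 and match its two components to the two defining conditions for $(T \oplus V, [\c, \c, \c]_\psi, N_{\chi})$. For a pair $(\psi, \chi) \in C^3_{\mathrm{NLie}}(T, V) = C^3(T, V) \oplus C^1_{\mathrm{NO}}(T, V)$, the formula $d^3(\psi, \chi) = (\delta^3 \psi,\, \partial^1 \chi + \phi^3 \psi)$ shows that being a $3$-cocycle is equivalent to the pair of equations $\delta^3 \psi = 0$ and $\partial^1 \chi + \phi^3 \psi = 0$. On the other hand, $(T \oplus V, [\c, \c, \c]_\psi, N_{\chi})$ is a Nijenhuis Lie triple system precisely when (i) $[\c, \c, \c]_\psi$ defines a Lie triple system structure on $T \oplus V$, and (ii) $N_{\chi}$ is a Nijenhuis operator with respect to this structure. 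I would prove (i) $\Leftrightarrow \delta^3 \psi = 0$ and (ii) $\Leftrightarrow \partial^1 \chi + \phi^3 \psi = 0$.

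For (i), verify the three Lie triple axioms on elements $a_i = x_i + u_i \in T \oplus V$. The defining identities $[a, a, b]_\psi = 0$ and the cyclic sum identity follow from the corresponding identities imposed on $\psi \in C^3(T, V)$ together with the skew and cyclic properties of $D$. For the fundamental identity, the terms living purely in $T$ vanish because $(T, [\c, \c, \c])$ is itself a Lie triple system, the terms linear in some $u_i$ vanish because $(V, \theta)$ satisfies the representation identities (2.1)--(2.2), and the remaining contributions (those independent of the $u_i$'s) collect into exactly $\delta^3 \psi(x_1, \ldots, x_5)$ by the coboundary formula for $n = 2$.

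For (ii), substitute $N_{\chi}(x + u) = N(x) + N_V(u) + \chi(x)$ into the Nijenhuis identity for $N_{\chi}$ and split into $T$- and $V$-components. The $T$-component is automatic because $N$ is already a Nijenhuis operator on $T$. In the $V$-component, the terms in which $\psi$ is evaluated on various mixtures of $N$-decorated and undecorated arguments, weighted by outside powers of $N_V$ with the alternating signs dictated by inclusion-exclusion, reconstruct $\phi^3 \psi(x, y, z)$ as in Definition 4.1; the remaining terms involving $\chi$ together with the induced representation data (for which Proposition 3.10 supplies the key identities for $\theta_N$ and $D_N$) reconstruct $\partial^1 \chi(x, y, z)$ as the Yamaguti-type coboundary in the Nijenhuis cochain complex. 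The converse follows by reading the same identities backwards, so the main obstacle is purely bookkeeping in (ii). I would organize it by stratifying the expansion of the Nijenhuis identity according to the number of $N$'s among the three arguments of $\psi$ (ranging from zero to three), tracking separately the outside powers of $N_V$, and matching each stratum against the corresponding summand in the explicit formula for $\phi^3$.
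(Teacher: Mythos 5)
Your proposal follows essentially the same route as the paper: split the cocycle condition $d^3(\psi,\chi)=(\delta^3\psi,\ \partial^1\chi+\phi^3\psi)=0$ into its two components, match $\delta^3\psi=0$ with the Lie triple system axioms for $[\cdot,\cdot,\cdot]_\psi$, and match $\partial^1\chi+\phi^3\psi=0$ with the Nijenhuis condition for $N_\chi$ by expanding and bookkeeping the $N$-decorated arguments. The paper's proof does exactly this (it records the expanded Nijenhuis identity as its equation (5.4) and asserts that $\partial\chi+\phi\psi=0$ is equivalent to it by routine calculation), so your plan is correct and, if anything, slightly more explicit than the paper on the $\delta^3\psi=0$ component.
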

\begin{proof}
For any $x, y, z\in T$ and $u, v, w\in V$,  then $N_{\chi}$ is a Nijenhuis operator if and only if
\begin{eqnarray*}
&& [N_{\chi}(x+u), N_{\chi}(y+v), N_{\chi}(z+w)]_{\psi}\\
&=&N_{\chi}\big( [N_{\chi}(x+u), N_{\chi}(y+v), z+w]_{\psi}+[x+u, N_{\chi}(y+v), N_{\chi}(z+w)]_{\psi}+[N_{\chi}(x+u), y+v, N_{\chi}(z+w)]_{\psi}\\
&& -N_{\chi}([N_{\chi}(x+u), y+v, z+w]_\psi+[x+u, N_{\chi}(y+v), z+w]_\psi+[x+u, y+v, N_{\chi}(z+w)]_\psi\\
&&-N_\chi[x+u, y+v, z+w]_\omega)\big),
\end{eqnarray*}
which induces that
\begin{eqnarray}
&& \psi(Nx, Ny, Nz)+\theta(Ny, Nz)\chi(x) -\theta(Nx, Nz)\chi(y) +D(Nx, Ny)\chi(z)\nonumber\\
&=&N_V(\psi(Nx, Ny, z)+\psi(Nx, y, Nz)+\psi(x, Ny, Nz)-N_V(\psi(Nx, y, z)+\psi(x, Ny, z)+\psi(x, y, Nz))\nonumber\\
&& +N^2_V\psi(x, y, z))+N_V( \theta(Ny, z)\chi(x)-\theta(Nx, z)\chi(y)+D(Nx, y)\chi(z)\nonumber\\
&&+\theta(y, Nz)\chi(x)-\theta(x, Nz)\chi(y)+D(x, Ny)\chi(z)-N_V(\theta(y, z)\chi(x)-\theta(x, z)\chi(y)+D(x, y)(\chi(z))))\nonumber\\
&& +\chi([Nx, Ny, z]+[Nx, y, Nz]+[x, Ny, Nz]-N_V([Nx, y, z]+[x, Ny, z]+[x, y, Nz]-N_V[x, y, z])).\ \ \ \ \ \ \ \ \ \
\end{eqnarray}

Conversely, if $(\psi, \chi)$ is a 3-cocycle, then $\psi$ is a
3-cocycle of $T$ with coefficients in the representation $(V, \theta)$ and $\partial N_{\chi} + \phi\psi = 0$. By routine
calculation, $\partial N_{\chi} + \phi\psi = 0$. is equivalent to that (5.4) holds.
\end{proof}

\begin{theorem} The isomorphism classes of abelian extensions of $(T_N, [\c, \c, \c])$ by $(V_{N_V}, \mu)$ are classified by the  cohomology group $H^{3}_{\mathrm{NLie}} (T, V)$.
\end{theorem}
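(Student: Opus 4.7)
The plan is to build a bijection between isomorphism classes of abelian extensions of $(T_N,[\c,\c,\c])$ by $(V_{N_V},\mu)$ and the cohomology group $H^3_{\mathrm{NLie}}(T,V)$ by sending an extension to a cocycle extracted via a section, and inverting this using the $(\psi,\chi)$-twisted direct sum constructed just before Proposition 5.4.

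First, given an abelian extension and a section $s$ of $p$, I would define $\psi\in C^3(T,V)$ and $\chi\in C^1(T,V)$ by $\psi(x,y,z):=[s(x),s(y),s(z)]_\wedge-s([x,y,z])$ and $\chi(x):=\hat N s(x)-sN(x)$. Both really land in $V=\ker p$ since $p$ is a morphism of Nijenhuis Lie triple systems. Identifying $\hat T$ with $T\oplus V$ via $s(x)+i(u)\leftrightarrow x+u$ turns $[\c,\c,\c]_\wedge$ into $[\c,\c,\c]_\psi$ and $\hat N$ into $N_\chi$; since $\hat T_{\hat N}$ really is a Nijenhuis Lie triple system, Proposition 5.4 forces $\delta^3\psi=0$ and $\partial^1\chi+\phi^3\psi=0$, i.e., $d^3(\psi,\chi)=0$.

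Next I would check that the class $[(\psi,\chi)]\in H^3_{\mathrm{NLie}}(T,V)$ depends on neither the section nor the isomorphism class of the extension. For a second section $s'$, the difference $\sigma:=s'-s$ lies in $C^1(T,V)=C^1_{\mathrm{NLie}}(T,V)$. Expanding $[s'(x),s'(y),s'(z)]_\wedge$ and $\hat N s'(x)-s'N(x)$, and repeatedly using that $V$ is an abelian ideal together with the identities $[u,s(y),s(z)]_\wedge=\widetilde\theta(y,z)u$ coming from Theorem 5.3, I expect $\psi'-\psi=\delta^1\sigma$ and $\chi'-\chi=\phi^1\sigma$ up to sign conventions, so $(\psi',\chi')-(\psi,\chi)=d^1\sigma$. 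For two isomorphic extensions linked by $\eta$, a section $s_1$ of $p_1$ is pushed to a section $s_2:=\eta\circ s_1$ of $p_2$, and since $\eta\circ i_1=i_2$ and $\eta\hat N_1=\hat N_2\eta$, the associated pairs coincide on the nose.

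Conversely, any $3$-cocycle $(\psi,\chi)$ yields via Proposition 5.4 a Nijenhuis Lie triple system $(T\oplus V,[\c,\c,\c]_\psi,N_\chi)$ whose canonical inclusion and projection form an abelian extension. Cohomologous cocycles give isomorphic extensions: if $(\psi',\chi')-(\psi,\chi)=d^1\sigma$, then $\eta(x+u):=x+u+\sigma(x)$ intertwines the two structures and is an isomorphism of the two exact sequences fixing $V$ and $T$. These two assignments are mutually inverse modulo the canonical identification $x+u\mapsto s(x)+i(u)$, yielding the desired bijection. The main obstacle I anticipate is bookkeeping rather than conceptual: verifying $(\psi'-\psi,\chi'-\chi)=d^1\sigma$ demands careful sign tracking, since the second slot combines $\phi^1\sigma$ with the Nijenhuis defect $\chi$, and the alternating sign in the final term of the formula defining $\phi^{2n-1}$ must line up exactly with the change produced by replacing $\hat N s$ by $\hat N s'$.
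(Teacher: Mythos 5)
Your proposal is correct and follows essentially the same route as the paper: extract $(\psi,\chi)$ from a section, invoke Proposition 5.4 for the cocycle condition $d^{3}(\psi,\chi)=0$, observe that an isomorphism $\eta$ of extensions transports a section $s_1$ to $s_2=\eta\circ s_1$ giving the same pair, and realize cohomologous cocycles as isomorphic extensions via $\eta(x,u)=(x,\gamma(x)+u)$. You are in fact slightly more thorough than the paper, which omits the check that two different sections of the \emph{same} extension yield cohomologous cocycles (your $(\psi',\chi')-(\psi,\chi)=d^{1}\sigma$ step), a verification needed for the map to cohomology to be well defined.
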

\begin{proof} Let $(\hat{T}_{\hat{N}_1},  [\c, \c, \c]_{\wedge_1})$ and $(\hat{T}_{\hat{N}_2},  [\c, \c, \c]_{\wedge_2})$  be two isomorphic   abelian extensions and the isomorphism is given by $\eta: \hat{T}_{\hat{N}_1}\rightarrow \hat{T}_{\hat{N}_2}$. Let $s_1: T_N\rightarrow \hat{T}_{\hat{N}_1}$ be a section of $p$. Then
$$
p'\circ (\eta\circ s_1)=(p'\circ \eta)\circ s=p\circ s_1=\mathrm{id}.
$$
This shows that $s_2:=\eta\circ s_1$ is a section of $p'$. Since $\eta$ is a morphism of Nijenhuis Lie triple systems, we have $\eta|_V = \mathrm{id}_V$. Thus,
\begin{align*}
\psi_2(x, y, z)&=[s_2(x), s_2(y), s_2(z)]_{\wedge_2}-s_2([x, y, z])\\
&=\eta([s_1(x), s_1(y), s_1(z)]_{\wedge_1}-s_1([x, y, z]))=\psi_1(x, y, z),
\end{align*}
and
\begin{align*}
\chi_2(x)&=\hat{N}_2(s_2(x))-s_2(N(x))=\hat{N}_2(\eta\circ s_1(x))-\eta\circ s_1(N(x))\\
&=\eta (\hat{N}_1(s_1(x))- s_1(N(x))=\chi_1(x).
\end{align*}
Therefore, isomorphic central extensions give rise to same 3-cocycle, hence, correspond to same element in $H^{3}_{\mathrm{NLie}} (T, V)$.

Conversely, let $(\psi_1,  \chi_1 )$ and $(\psi_2,  \chi_2)$ be two cohomologous 3-cocycles. Therefore, there exists a map $\gamma: T \rightarrow  V$ such that
$$
(\psi_1,  \chi_1)-(\psi_2,  \chi_2)=d\gamma.
$$
The Nijenhuis Lie triple system structures on $T \oplus V$  corresponding to the above 3-cocycles are isomorphic via the map $\eta: T \oplus V\rightarrow T \oplus V$ given by $\eta(x, u) = (x, \gamma(x)+u)$. Hence the proof is completed.
\end{proof}

\section{Skeletal Nijenhuis Lie triple 2-systems and crossed modules}\label{chap-6}

In this section, we introduce the notion of Nijenhuis Lie triple 2-systems-and show that skeletal Nijenhuis Lie triple systems are classified by 5-cocycles of Nijenhuis Lie triple systems.

In the following, we first recall the definition of Lie triple 2-systems from \cite{S23}.
\begin{definition}
A Lie triple 2-system is a quintuple $(T_0, T_1,h,l_3,l_5)$, where $h:T_1\longrightarrow T_0$
is a linear map, $l_3: T_i\wedge T_j\wedge T_k\longrightarrow T_{i+j+k}~~(0\leq
i+j+k\leq 1)$ are trilinear maps and $l_5:\wedge^{2}T_0
\wedge \wedge^{3}T_0\longrightarrow T_{1}$ is a
multilinear map, and for any $x_i\in T_0~(i=1,\cdot\cdot\cdot,7),a\in T_1$, the followings are satisfied:
 \begin{align*}
(L1)~& l_3(x,x,y)=0,~~l_3(x,x,a)=0,~~l_3(a,x,y)+l_3(x,a,y)=0\\
(L2)~& hl_3(a,y,z)=l_3(h(a),y,z),\\
(L3)~&l_3(h(a),b,x)=l_3(a,h(b),x),~~l_3(h(a),x,b)=l_3(a,x,h(b)),~~l_3(x,h(a),b)=l_3(x,a,h(b)),\\
(L4)~&l_3(x,y,z)+l_3(y,z,x)+l_3(z,x,y)=0,~~l_3(x,y,a)+l_3(y,a,x)+l_3(a,x,y)=0,\\
(L5)~&hl_5(x_1,x_2,x_3,x_4,x_5)=-l_3(x_1,x_2,l_3(x_3,x_4,x_5))+l_3(x_3,l_3(x_1,x_2,x_4),x_5)\\
\quad &+l_3(l_3(x_1,x_2,x_3),x_4,x_5)+l_3(x_3,x_4,l_3(x_1,x_2,x_5)),
 \end{align*}
 \begin{align*}
(L6)~& l_5(h(a),x_2,x_3,x_4,x_5)=-l_3(a,x_2,l_3(x_3,x_4,x_5))+l_3(x_3,l_3(a,x_2,x_4),x_5)\\
\quad &+l_3(l_3(a,x_2,x_3),x_4,x_5)+l_3(x_3,x_4,l_3(a,x_2,x_5)),\\
(L7)~&l_5(x_1,h(a),x_3,x_4,x_5)=-l_3(x_1,a,l_3(x_3,x_4,x_5))+l_3(x_3,l_3(x_1,a,x_4),x_5)\\
\quad &+l_3(l_3(x_1,a,x_3),x_4,x_5)+l_3(x_3,x_4,l_3(x_1,a,x_5)),\\
(L8)~&l_5(x_1,x_2,h(a),x_4,x_5)=-l_3(x_1,x_2,l_3(a,x_4,x_5))+l_3(a,l_3(x_1,x_2,x_4),x_5)\\
\quad &+l_3(l_3(x_1,x_2,a),x_4,x_5)+l_3(a,x_4,l_3(x_1,x_2,x_5)),\\
(L9)~&l_5(x_1,x_2,x_3,h(a),x_5)=-l_3(x_1,x_2,l_3(x_3,a,x_5))+l_3(x_3,l_3(x_1,x_2,a),x_5)\\
\quad &+l_3(l_3(x_1,x_2,x_3),a,x_5)+l_3(x_3,a,l_3(x_1,x_2,x_5)),\\
(L10)~&l_5(x_1,x_2,x_3,x_4,h(a))=-l_3(x_1,x_2,l_3(x_3,x_4,a))+l_3(x_3,l_3(x_1,x_2,x_4),a)\\
\quad &+l_3(l_3(x_1,x_2,x_3),x_4,a)+l_3(x_3,x_4,l_3(x_1,x_2,a)),\\
(L11)~&l_3(l_5(x_1,x_2,x_3,x_4,x_5),x_6,x_7)-l_3(l_5(x_1,x_2,x_3,x_4,x_6),x_5,x_7)\\
\quad &+l_3(x_1,x_2,l_5(x_3,x_4,x_5,x_6,x_7))-l_3(x_3,x_4,l_5(x_1,x_2,x_5,x_6,x_7))\\
\quad &+l_3(x_5,x_6,l_5(x_1,x_2,x_3,x_4,x_7))-l_5(l_3(x_1,x_2,x_3),x_4,x_5,x_6,x_7)\\
\quad &-l_5(x_3,l_3(x_1,x_2,x_4),x_5,x_6,x_7)-l_5(x_3,x_4,l_3(x_1,x_2,x_5),x_6,x_7)\\
\quad &-l_5(x_3,x_4,x_5,l_3(x_1,x_2,x_6),x_7)-l_5(x_3,x_4,x_5,x_6,l_3(x_1,x_2,x_7))\\
\quad &+l_5(x_1,x_2,l_3(x_3,x_4,x_5),x_6,x_7)+l_5(x_1,x_2,x_5,l_3(x_3,x_4,x_6),x_7) \\
\quad &+l_5(x_1,x_2,x_5,x_6,l_3(x_3,x_4,x_7))-l_5(x_1,x_2,x_3,x_4,l_3(x_5,x_6,x_7))=0.
 \end{align*}
 A Lie triple 2-system is called skeletal (strict) if $h=0$
($l_5=0$).
\end{definition}
Inspired by  \cite{G23} and \cite{JS21},  we give the notion of a  Nijenhuis Lie triple 2-system.
\begin{definition}
A  Nijenhuis Lie triple 2-system consists of a Lie triple 2-system $\mathcal{G} = (T_0, T_1, h, l_3, l_5)$
and a  Nijenhuis operator $\Theta = (N_0, N_1, N_2)$ on $\mathcal{G}$, where $N_0:T_0\rightarrow T_0, N_1: T_1\rightarrow T_1$ and a  trilinear map $N_2: T_0\otimes T_0\otimes T_0\rightarrow T_1$ satisfying  the following equalities: for any $x, y, z, x_i\in T_0~(i=1,\cdot\cdot\cdot,5),a\in T_1$
\begin{small}
\begin{eqnarray*}
(a)&& N_0\circ h=h\circ N_1\\
(b)&& N_2(x, x, y) = 0,\\
(c)&& N_2(x, y, z) +N_2(y, z, x)+N_2(z, x, y)=0,\\
(d)&& N_0(l_3(N_0(x), N_0(y), z)+l_3(x, N_0(y), N_0(z))+l_3(N_0(x), y, N_0(z))\\
\quad &&-N_0  l_3(N_0(x), y, z)-N_0  l_3(x, N_0(y), z)-N_0 l_3(x, y, N_0(z)) +N^2_0 l_3(x, y, z))
-l_3(N_0(x), N_0(y), N_0(z))\\
\quad \quad \quad \quad &&=hN_2(x, y, z);
\end{eqnarray*}
\begin{eqnarray*}
(e)&&N_1(l_3(N_0(x), N_0(y), a)+l_3(x, N_0(y), N_1(a))+l_3(N_0(x), y, N_1(\a))-N_0  l_3(N_0(x), y, a)-N_0  l_3(x, N_0(y), a)\\
\quad &&-N_0 l_3(x, y, N_1(a)) +N^2_0  l_3(x, y, a))-l_3(N_1(a), N_0(x))\\
\quad \quad \quad \quad &&=N_2( x, y, h(a));\\
(f)&& l_5(N_0(x_1),N_0(x_2), N_0(x_3), N_0(x_4), N_0(x_5))+l_3(N_2(x_1, x_2, x_3), N_0(x_4), N_0(x_5))\\
\quad &&  +l_3(N_0(x_3), N_2(x_1, x_2, x_4),N_0(x_5))+l_3(N_0(x_3), N_0(x_4), N_2(x_1, x_2, x_5))\\
\quad && +N_2(l_3(N_0(x_1), N_0(x_2), x_3)+l_3(x_1, N_0(x_2), N_0(x_3))+l_3(N_0(x_1), x_2, N_0(x_3)))\\
\quad &&-N_0 l_3(N_0(x_1), x_2, x_3-N_0 l_3(x_1, N_0(x_2), x_3)-N_0 l_3(x_1, x_2, N_0(x_3)) +N^2_0 l_3(x_1, x_2, x_3), x_4, x_5)\\
\quad &&+ N_2(x_3,l_3(N_0(x_1), N_0(x_2), x_4)+l_3(x_1, N_0(x_2), N_0(x_4))+l_3(N_0(x_1), x_2, N_0(x_4))\\
\quad &&-N_0 l_3(N_0(x_1), x_2, x_4)-N_0 l_3(x_1, N_0(x_2), x_4)-N_0 l_3(x_1, x_2, N_0(x_4)) +-N_0^2 l_3(x_1, x_2, x_4),x_5 )\\
\quad &&+N_2(x_3, x_4, l_3(N_0(x_1), N_0(x_2), x_5)+l_3(x_1, N_0(x_2), N_0(x_5))+l_3(N_0(x_1), x_2, N_0(x_5))\\
\quad &&-N_0 l_3(N_0(x_1), x_2, x_5)-N_0 l_3(x_1, N_0(x_2), x_5)-N_0l_3(x_1, x_2, N_0(x_5)) +N^2_0 l_3(x_1, x_2, x_5))\\
\quad &&=l_3(N_0(x_1), N_0(x_2), N_2(x_3, x_4, x_5))+N_2(x_1, x_2, l_3(N_0(x_3), N_0(x_4), x_5)+l_3(x_3, N_0(x_4), N_0(x_5))\\
\quad &&+l_3(N_0(x_3), x_4, N_0(x_5))-N_0 l_3(N_0(x_3), x_4, x_5)-N_0 l_3(x_3, N_0(x_4), x_5)\\
\quad &&-N_0 l_3(x_3, x_4, N_0(x_5)) +N^2_0 l_3(x_3, x_4, x_5))+ N_1(l_5(x_1, x_2, x_3, x_4, x_5)).
\end{eqnarray*}
\end{small}
\end{definition}
We will denote a  Nijenhuis Lie triple 2-system by $(\mathcal{G}, \Theta)$. A  Nijenhuis Lie triple 2-system is said to
be {\bf skeletal} if $h = 0$.    A  Nijenhuis Lie triple 2-system is said to
be {\bf strict} if $l_5 = 0, N_2=0$.

Let $(\mathcal{G}, \Theta)$ be a skeletal  Nijenhuis Lie triple 2-system. Then $\mathcal{G}$ is a  Lie triple 2-system. Therefore, we have
\begin{align}
~& l_3(x,x,y)=0,~~l_3(x,x,a)=0,~~l_3(a,x,y)+l_3(x,a,y)=0\label{6.1}\\
&l_3(x,y,z)+l_3(y,z,x)+l_3(z,x,y)=0,~~l_3(x,y,a)+l_3(y,a,x)+l_3(a,x,y)=0,\\
0=&-l_3(x_1,x_2,l_3(x_3,x_4,x_5))+l_3(x_3,l_3(x_1,x_2,x_4),x_5)\nonumber\\
&+l_3(l_3(x_1,x_2,x_3),x_4,x_5)+l_3(x_3,x_4,l_3(x_1,x_2,x_5)),\\
0=&-l_3(a,x_2,l_3(x_3,x_4,x_5))+l_3(x_3,l_3(a,x_2,x_4),x_5)\nonumber\\
&+l_3(l_3(a,x_2,x_3),x_4,x_5)+l_3(x_3,x_4,l_3(a,x_2,x_5)),\\
0=&-l_3(x_1,a,l_3(x_3,x_4,x_5))+l_3(x_3,l_3(x_1,a,x_4),x_5)\nonumber\\
&+l_3(l_3(x_1,a,x_3),x_4,x_5)+l_3(x_3,x_4,l_3(x_1,a,x_5)),\\
0=&-l_3(x_1,x_2,l_3(a,x_4,x_5))+l_3(a,l_3(x_1,x_2,x_4),x_5)\nonumber\\
&+l_3(l_3(x_1,x_2,a),x_4,x_5)+l_3(a,x_4,l_3(x_1,x_2,x_5)),\\
0=&-l_3(x_1,x_2,l_3(x_3,a,x_5))+l_3(x_3,l_3(x_1,x_2,a),x_5)\nonumber\\
&+l_3(l_3(x_1,x_2,x_3),a,x_5)+l_3(x_3,a,l_3(x_1,x_2,x_5)),\\
0=&-l_3(x_1,x_2,l_3(x_3,x_4,a))+l_3(x_3,l_3(x_1,x_2,x_4),a)\nonumber\\
&+l_3(l_3(x_1,x_2,x_3),x_4,a)+l_3(x_3,x_4,l_3(x_1,x_2,a)),
 \end{align}
\begin{align}
&l_3(l_5(x_1,x_2,x_3,x_4,x_5),x_6,x_7)-l_3(l_5(x_1,x_2,x_3,x_4,x_6),x_5,x_7)
\nonumber\\&+l_3(x_1,x_2,l_5(x_3,x_4,x_5,x_6,x_7))-l_3(x_3,x_4,l_5(x_1,x_2,x_5,x_6,x_7))
\nonumber\\&+l_3(x_5,x_6,l_5(x_1,x_2,x_3,x_4,x_7))-l_5(l_3(x_1,x_2,x_3),x_4,x_5,x_6,x_7)
\nonumber\\&-l_5(x_3,l_3(x_1,x_2,x_4),x_5,x_6,x_7)-l_5(x_3,x_4,l_3(x_1,x_2,x_5),x_6,x_7)
\nonumber\\&-l_5(x_3,x_4,x_5,l_3(x_1,x_2,x_6),x_7)-l_5(x_3,x_4,x_5,x_6,l_3(x_1,x_2,x_7))
\nonumber\\&+l_5(x_1,x_2,l_3(x_3,x_4,x_5),x_6,x_7)+l_5(x_1,x_2,x_5,l_3(x_3,x_4,x_6),x_7) \nonumber\\
&+l_5(x_1,x_2,x_5,x_6,l_3(x_3,x_4,x_7))-l_5(x_1,x_2,x_3,x_4,l_3(x_5,x_6,x_7))=0.\label{6.13}
 \end{align}
\begin{theorem}
 There is a one-to-one correspondence between skeletal Nijenhuis Lie triple 2-systems and 5-cocycles
of Nijenhuis Lie triple systems.
\end{theorem}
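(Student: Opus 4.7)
The plan is to unpack the skeletal condition $h = 0$ throughout the axioms of a Nijenhuis Lie triple 2-system and match them term by term with the cocycle condition $d^5(\a, \b) = 0$ from Definition 4.3 (with $n = 3$, so that $C^5_{\mathrm{NLie}}(T, V) = C^5(T, V) \oplus C^3_{\mathrm{NO}}(T, V)$).

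Given a skeletal Nijenhuis Lie triple 2-system $(T_0, T_1, 0, l_3, l_5, N_0, N_1, N_2)$, I would set $T := T_0$, $V := T_1$ and first show that $(T, l_3, N_0)$ is a Nijenhuis Lie triple system: when $h = 0$, axiom $(L5)$ becomes the Lie triple identity $(6.3)$, which combined with $(L1)$ and $(L4)$ makes $(T, l_3)$ a Lie triple system, while axiom $(d)$ collapses (since $hN_2 = 0$) to the Nijenhuis identity for $N_0$. Defining $\theta(x, y)a := l_3(a, x, y)$ and $D(x, y)a := l_3(x, y, a)$, the identities $(6.4)$--$(6.8)$ (the content of $(L6)$--$(L10)$ at $h = 0$) translate to the representation axioms $(2.1)$--$(2.2)$ for $(V, \theta)$, while axiom $(e)$ at $h = 0$ yields the compatibility $(3.1)$ for $N_1$, so that $(V, \theta, N_1)$ becomes a representation of $(T_{N_0}, l_3)$. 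Next, $l_5$ determines a cochain $\a \in C^5(T, V)$ and $N_2$ a cochain $\b \in C^3_{\mathrm{NO}}(T, V)$ (the cochain symmetry conditions follow from the antisymmetry built into the domain $\wedge^2 T_0 \wedge \wedge^3 T_0$ of $l_5$ and from $(b), (c)$ respectively). Identity $(6.13)$ is exactly $\delta^5 \a = 0$, and axiom $(f)$ at $h = 0$, after collecting terms, is precisely $\partial^3 \b - \phi^5 \a = 0$; hence $d^5(\a, \b) = 0$.

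The converse is obtained by reversing this dictionary: from a Nijenhuis Lie triple system $(T_N, [\c,\c,\c])$, a representation $(V, \theta, N_V)$, and a 5-cocycle $(\a, \b)$, set $T_0 := T$, $T_1 := V$, $h := 0$, $N_0 := N$, $N_1 := N_V$, $l_5 := \a$, $N_2 := \b$, and extend $l_3$ to mixed triples with at most one argument in $T_1$ using $\theta$ and $D$ as in the abelian-extension construction of Section 5; then each axiom of a skeletal Nijenhuis Lie triple 2-system is verified by the corresponding identity on the cohomology side. The main obstacle is the term-by-term matching in axiom $(f)$, which is by far the most intricate identity and involves many sums over placements of $N_0$ and $N_1$ on five arguments; regrouping these to recognize the operators $\partial^3$ and $\phi^5$ (themselves defined via alternating sums over insertions of the Nijenhuis operator into the induced Yamaguti complex) requires careful bookkeeping, whereas all other correspondences are essentially direct translations once the dictionary $\theta(x,y)a = l_3(a,x,y)$ is fixed.
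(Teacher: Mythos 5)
Your proposal is correct and follows essentially the same route as the paper: identify $(T_0,l_3,N_0)$ as the Nijenhuis Lie triple system, read off the representation on $T_1$ from the mixed $l_3$ brackets, take $l_5$ and $N_2$ as the two components of a cochain in $C^5(T,V)\oplus C^3_{\mathrm{NO}}(T,V)$, and match $(L11)$ with $\delta^5=0$ and axiom $(f)$ with $\partial^3 N_2-\phi^5 l_5=0$. If anything, your write-up is more explicit than the paper's (which compresses the representation-structure verification and the converse into a single sentence each), but the underlying dictionary is identical.
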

\begin{proof}
Let $(\mathcal{G}, \Theta)$ be a skeletal   Lie triple 2-system. Then $(T_0, l_3, N_0)$ is a  Nijenhuis Lie triple system. Define linear maps
$f: (T_0\wedge T_0) \wedge (T_0\wedge T_0\wedge T_0 )\rightarrow T_1$ and $\theta: T_0 \wedge T_0\wedge T_0\rightarrow T_1$ by
\begin{eqnarray*}
& f(x_1, x_2, x_3, x_4, x_5)=l_5(x_1, x_2, x_3, x_4, x_5),\\
& \theta(x, y, z)=N_2(x, y, z).
\end{eqnarray*}
By (e) in Definition 6.2, we have, we have
\begin{eqnarray}
&& f(N_0(x_1),N_0(x_2), N_0(x_3), N_0(x_4), N_0(x_5))+l_3(N_2(x_1, x_2, x_3), N_0(x_4), N_0(x_5))\nonumber\\
\quad &&  +l_3(N_0(x_3), \theta(x_1, x_2, x_4),N_0(x_5))+l_3(N_0(x_3), N_0(x_4), \theta(x_1, x_2, x_5))\nonumber\\
\quad && +\theta(l_3(N_0(x_1), N_0(x_2), x_3)+l_3(x_1, N_0(x_2), N_0(x_3))+l_3(N_0(x_1), x_2, N_0(x_3))-N_0 l_3(N_0(x_1), x_2, x_3)\nonumber\\
\quad &&-N_0 l_3(x_1, N_0(x_2), x_3)-N_0 l_3(x_1, x_2, N_0(x_3)) +N_0^2 l_3(x_1, x_2, x_3), x_4, x_5)\nonumber\\
\quad &&+ \theta(x_3,l_3(N_0(x_1), N_0(x_2), x_4)+l_3(x_1, N_0(x_2), N_0(x_4))+l_3(N_0(x_1), x_2, N_0(x_4)) \label{6.14}\\
\quad &&-N_0 l_3(N_0(x_1), x_2, x_4)-N_0 l_3(x_1, N_0(x_2), x_4)-N_0 l_3(x_1, x_2, N_0(x_4)) +N_0^2 l_3(x_1, x_2, x_4),x_5 )\nonumber\\
\quad &&+N_2(x_3, x_4, l_3(N_0(x_1), N_0(x_2), x_5)+l_3(x_1, N_0(x_2), N_0(x_5))+l_3(N_0(x_1), x_2, N_0(x_5))\nonumber\\
\quad &&-N_0 l_3(N_0(x_1), x_2, x_5)-N_0 l_3(x_1, N_0(x_2), x_5)-N_0 l_3(x_1, x_2, N_0(x_5)) +N_0^2 l_3(x_1, x_2, x_5))\nonumber\\
\quad &&-l_3(N_0(x_1), N_0(x_2), N_2(x_3, x_4, x_5))-\theta(x_1, x_2, l_3(N_0(x_3), N_0(x_4), x_5)+l_3(x_3, N_0(x_4), N_0(x_5))\nonumber\\
\quad &&+l_3(N_0(x_3), x_4, N_0(x_5))-N_0 l_3(N_0(x_3), x_4, x_5)-N_0 l_3(x_3, N_0(x_4), x_5)\nonumber\\
\quad &&-N_0 l_3(x_3, x_4, N_0(x_5)) +N_0^2 l_3(x_3, x_4, x_5))- N_1(f(x_1, x_2, x_3, x_4, x_5))=0.\nonumber
\end{eqnarray}
 By (\ref{6.13}) and (\ref{6.14}),  we have $d^5(f, \theta)=0$. Thus,  $(f, \theta)$ is a 5-cocycle of  Nijenhuis Lie triple systems.

The proof of the other direction is similar, and  we are done.
\end{proof}

Now we turn to the study on strict Nijenhuis Lie triple 2-systems. First we introduce the notion of crossed modules of
Nijenhuis Lie triple systems.
\begin{definition}
A crossed module of  Nijenhuis Lie triple systems is a sextuple $((T{_0}, [\cdot, \cdot, \cdot]_{T_0}),(T_1, [\cdot, \cdot,\\ \cdot]_{T_1}), h,\Lambda,  N_0, N_1)$
where $(T_0, [\cdot, \cdot, \cdot]_{T_0}, R_0)$ is a   Nijenhuis Lie triple system and $(T_1, [\cdot, \cdot, \cdot]_{T_1})$ is a  Lie triple system, $h: T_1\rightarrow T_0$ is a homomorphism of Lie triple systems,   and $(T_1, \Lambda,  N_1)$ is a Nijenhuis representation  over $(T_0, [\cdot, \cdot, \cdot]_{T_0}, N_0)$ such that for all $x, y \in T_0$ and $a, b, c \in T_1$, the following
equalities are satisfied:
\begin{align*}
N_0\circ h&=h\circ N_1,\\
h(\Lambda(x,y)a)&=[h(a),x,y]_{T_0},\\
\Lambda(h(a),h(b))c&=[c,a,b]_{T_1}.
\end{align*}
\end{definition}
Let $(\mathcal{G}, \Theta)$ be a strict Nijenhuis Lie triple 2-system. Then $\mathcal{G}$ is a  strict Lie triple 2-system. Therefore, we have
\begin{align}
& l_3(x,x,y)=0,~~l_3(x,x,a)=0,~~l_3(a,x,y)+l_3(x,a,y)=0, \label{6.15}\\
& hl_3(a,y,z)=l_3(h(a),y,z),\label{6.16}\\
&l_3(h(a),b,x)=l_3(a,h(b),x),~~l_3(h(a),x,b)=l_3(a,x,h(b)),~~l_3(x,h(a),b)=l_3(x,a,h(b)),\label{6.17}\\
&l_3(x,y,z)+l_3(y,z,x)+l_3(z,x,y)=0,~~l_3(x,y,a)+l_3(y,a,x)+l_3(a,x,y)=0,\label{6.18}\\
0=&-l_3(x_1,x_2,l_3(x_3,x_4,x_5))+l_3(x_3,l_3(x_1,x_2,x_4),x_5)\nonumber\\
&+l_3(l_3(x_1,x_2,x_3),x_4,x_5)+l_3(x_3,x_4,l_3(x_1,x_2,x_5)),\label{6.19}\\
0=&-l_3(a,x_2,l_3(x_3,x_4,x_5))+l_3(x_3,l_3(a,x_2,x_4),x_5)\nonumber\\
&+l_3(l_3(a,x_2,x_3),x_4,x_5)+l_3(x_3,x_4,l_3(a,x_2,x_5)),\label{6.20}\\
0=&-l_3(x_1,a,l_3(x_3,x_4,x_5))+l_3(x_3,l_3(x_1,a,x_4),x_5)\nonumber\\
&+l_3(l_3(x_1,a,x_3),x_4,x_5)+l_3(x_3,x_4,l_3(x_1,a,x_5)),\label{6.21}\\
0=&-l_3(x_1,x_2,l_3(a,x_4,x_5))+l_3(a,l_3(x_1,x_2,x_4),x_5)\nonumber\\
&+l_3(l_3(x_1,x_2,a),x_4,x_5)+l_3(a,x_4,l_3(x_1,x_2,x_5)),\label{6.22}\\
0=&-l_3(x_1,x_2,l_3(x_3,a,x_5))+l_3(x_3,l_3(x_1,x_2,a),x_5)\nonumber\\
&+l_3(l_3(x_1,x_2,x_3),a,x_5)+l_3(x_3,a,l_3(x_1,x_2,x_5)),\label{6.23}\\
0=&-l_3(x_1,x_2,l_3(x_3,x_4,a))+l_3(x_3,l_3(x_1,x_2,x_4),a)\nonumber\\
&+l_3(l_3(x_1,x_2,x_3),x_4,a)+l_3(x_3,x_4,l_3(x_1,x_2,a)).\label{6.24}
 \end{align}
for all $x, y, z, x_i(i=1,...5)\in T_0$ and $a, b\in T_1$.
\begin{theorem}
 There is a one-to-one correspondence between strict Nijenhuis Lie triple 2-systems and crossed modules of
Nijenhuis Lie triple systems.
\end{theorem}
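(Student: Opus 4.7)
The plan is to construct a bijection by giving explicit passages in both directions and checking that each produces a valid structure of the other type; mutual invertibility is then immediate. The key observation is that, in a strict Nijenhuis Lie triple 2-system, the map $l_3$ is completely determined by its values on pure $T_0$ inputs together with its values on inputs with exactly one $T_1$ entry in the first position, because (L1) yields $l_3(x,a,y)=-l_3(a,x,y)$ and (L4) yields $l_3(x,y,a)=-l_3(y,a,x)-l_3(a,x,y)$.

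In the forward direction, let $(\mathcal{G},\Theta)$ be a strict Nijenhuis Lie triple 2-system, so $l_5=0$ and $N_2=0$. I would define
\[
[x,y,z]_{T_0}:=l_3(x,y,z),\qquad [a,b,c]_{T_1}:=l_3(a,h(b),h(c)),\qquad \Lambda(x,y)a:=l_3(a,x,y),
\]
and retain $N_0,N_1$ as given. Axioms (L1), (L4), and (L5) with $l_5=0$ produce the Lie triple system structure on $T_0$, while (L1), (L3), and (L6) with $l_5=0$ (together with (L2) for compatibility) give the Lie triple system structure on $T_1$ and show $h$ is a morphism. The map $\Lambda$ is then shown to be a representation of $T_0$ on $T_1$ by converting (L7)--(L10) with $l_5=0$ into (2.1) and (2.2). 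The Nijenhuis compatibility (3.1) for $(T_1,\Lambda,N_1)$ is extracted from (e) with $N_2=0$, while (d) with $N_2=0$ is exactly the Nijenhuis operator condition for $N_0$ on $T_0$. The remaining crossed-module identities $N_0\circ h=h\circ N_1$, $h(\Lambda(x,y)a)=[h(a),x,y]_{T_0}$, and $\Lambda(h(a),h(b))c=[c,a,b]_{T_1}$ are, respectively, (a), (L2), and the definition of $[\cdot,\cdot,\cdot]_{T_1}$.

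In the reverse direction, given a crossed module $((T_0,[\cdot,\cdot,\cdot]_{T_0}),(T_1,[\cdot,\cdot,\cdot]_{T_1}),h,\Lambda,N_0,N_1)$, I would set $l_5=0$, $N_2=0$, and define $l_3(x,y,z):=[x,y,z]_{T_0}$ on pure $T_0$ inputs together with
\[
l_3(a,x,y):=\Lambda(x,y)a,\ \ l_3(x,a,y):=-\Lambda(x,y)a,\ \ l_3(x,y,a):=\Lambda(y,x)a-\Lambda(x,y)a
\]
for $a\in T_1$, $x,y\in T_0$. Axioms (L1) and (L4) follow directly from these definitions; (L2) is a reformulation of $h(\Lambda(x,y)a)=[h(a),x,y]_{T_0}$; and the three parts of (L3) reduce to the Peiffer identity $\Lambda(h(a),h(b))c=[c,a,b]_{T_1}$ combined with the skew-symmetry of the first two slots in $[\cdot,\cdot,\cdot]_{T_1}$. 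The cascade (L5)--(L10) is obtained by applying the representation axioms (2.1), (2.2) in the configurations corresponding to each placement of the $T_1$-valued argument, and (L11) is vacuous because $l_5=0$. The Nijenhuis conditions (a)--(f) in Definition 6.2 likewise reduce, respectively, to $N_0\circ h=h\circ N_1$, the trivial identities (b)--(c) granted by $N_2=0$, the Nijenhuis operator condition on $(T_0,N_0)$, the Nijenhuis representation condition (3.1) on $(T_1,\Lambda,N_1)$, and the vanishing of (f) because $l_5=0$ and $N_2=0$.

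The main obstacle is the careful book-keeping in the reverse direction for (L5)--(L10) and (e): the $T_1$-valued argument can occupy any of five (respectively three) slots, and each configuration must be massaged using $l_3(x,a,y)=-l_3(a,x,y)$ and (L4) to reach a form in which one of the two representation axioms (2.1), (2.2) applies cleanly. Once both passages are in place, mutual inversion is transparent: starting from a strict Nijenhuis Lie triple 2-system and extracting the crossed module, the rebuilt $l_3$ agrees with the original on every graded component by the key observation above, and starting from a crossed module the two operations are inverse by direct comparison of formulas.
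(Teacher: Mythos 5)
Your construction coincides with the paper's in both directions: the same brackets $[x,y,z]_{T_0}=l_3(x,y,z)$ and $[a,b,c]_{T_1}=l_3(a,h(b),h(c))$, the same action $\Lambda(x,y)a=l_3(a,x,y)$ in the forward direction, and the same reassembly of $l_3$ with $l_5=0$, $N_2=0$ in the reverse direction. In fact your reverse direction is more complete than the paper's, which only writes down $l_3(c,h(a),h(b))$ and never specifies $l_3$ on a general mixed triple such as $(a,x,y)$ with $x,y\in T_0$ arbitrary; your explicit formulas $l_3(a,x,y)=\Lambda(x,y)a$, $l_3(x,a,y)=-\Lambda(x,y)a$, $l_3(x,y,a)=\Lambda(y,x)a-\Lambda(x,y)a$ are exactly what (L1) and (L4) force.

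There is, however, one concrete claim in your reverse direction that does not hold as stated: that the three identities of (L3) ``reduce to the Peiffer identity $\Lambda(h(a),h(b))c=[c,a,b]_{T_1}$ combined with the skew-symmetry of the first two slots of $[\cdot,\cdot,\cdot]_{T_1}$.'' Unwinding your definitions, the first identity of (L3), namely $l_3(h(a),b,x)=l_3(a,h(b),x)$, becomes $-\Lambda(h(a),x)b=\Lambda(h(b),x)a$ for an \emph{arbitrary} $x\in T_0$, and the second and third (which are equivalent to each other) become $\Lambda(x,h(a))b-\Lambda(h(a),x)b=\Lambda(x,h(b))a$. The Peiffer identity only constrains $\Lambda(u,v)$ when \emph{both} $u$ and $v$ lie in the image of $h$, so it cannot yield these mixed identities; the contrast with the Lie-algebra case, where the analogous axiom $l_2(h(a),b)=l_2(a,h(b))$ has no spare $T_0$ slot, is exactly the point. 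These two families of identities must either be added to Definition 6.5 as further crossed-module axioms or imported from the source definition in \cite{S23}; as Definition 6.5 is written, your reverse construction does not verify (L3). To be fair, the paper's own proof disposes of the converse with the phrase ``direct verification'' and silently passes over the same point, so this is as much an omission in Definition 6.5 as in your argument; but since you assert a specific derivation of (L3) that fails, it must be flagged. The rest of your argument --- the forward direction, the treatment of (L5)--(L11), and the reduction of the Nijenhuis conditions (a)--(f) --- is sound and matches the paper.
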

\begin{proof}
Let $(\mathcal{G}, \Theta)$ be a strict  Nijenhuis Lie triple 2-system. We construct a crossed module of
strict  Nijenhuis Lie triple system as follows. Obviously, $(T_0, [\cdot, \cdot, \cdot]_{T_0}, N_0)$ is a  Nijenhuis Lie triple system. Define brackets  $[\cdot,\cdot,\cdot]_{T_0}, [\cdot, \cdot, \cdot]_{T_1}$ on $T_0, T_1$ respectively as:
\begin{align}
[x, y, z]_{T_0}&=l_3(x, y, z), \nonumber\\
[a, b, c]_{T_1}&=l_3(h(a),h(b),c)=l_3(h(a),b,h(c))=l_3(a,h(b),h(c)).\label{6.25}
\end{align}
By conditions (\ref{6.15}), (\ref{6.18}), (\ref{6.19}) and  (\ref{6.25}). Then    $(T_1, [\cdot, \cdot, \cdot]_{T_1})$ is a Lie triple system.  Obviously, we deduce that $h$ is a
homomorphism between Lie triple systems. Define $\Lambda: T_0\wedge T_0  \rightarrow \mathfrak{gl}(T_1)$ by
\begin{align}
\Lambda(h(a),h(b))c&=l_3(c,h(a),h(b))=[c,a, b ]_{T_1}. \label{6.26}
\end{align}
By conditions (d),  (e) and (\ref{6.20}-\ref{6.24}), (\ref{6.26}),  the triple $(T_1, \Lambda, N_1)$ is a  representation  over the Nijenhuis Lie triple system $(T_0, [\cdot, \cdot, \cdot]_{T_0}, N_0)$.
Thus $((T_0, [\cdot, \cdot, \cdot]_{T_0}), (T_1, [\cdot, \cdot, \cdot]_{T_1}), h, \theta, N_0, N_1)$ is a crossed module of  Nijenhuis Lie triple system.

Conversely, a crossed module of Nijenhuis Lie triple system $((T_0, [\cdot, \cdot, \cdot]_{T_0}), (T_1, [\cdot, \cdot, \cdot]_{T_1}), h, \Lambda,N_0, N_1)$  gives rise to a strict  Nijenhuis Lie triple 2-system $(T_0,T_1, h, l_3, l_5 = 0, N_0, N_1, N_2=0)$, where $l_3 : T_i \wedge T_j \wedge T_k \rightarrow T_{i+j+k} , 0 \leq i + j+k \leq 1$ is given by
\begin{eqnarray*}
l_3(x, y, z) = [x, y, z]_{T_0}, l_3(c,h(a),h(b))=[c,a, b ]_{T_1}=\Lambda(h(a),h(b))(c).
\end{eqnarray*}
Direct verification shows that $(T_0, T_1, h, l_3, l_5=0, N_0, N_1, N_2=0)$  is a strict  Nijenhuis Lie triple 2-system.
\end{proof}

 \begin{center}
 {\bf ACKNOWLEDGEMENT}
 \end{center}
 Shuangjian Guo is  supported  by Natural Science Foundation of China ( No. 12161013) and   Guizhou Provincial Basic Research Program (Natural Science) (No. ZK[2023]025). Ripan Saha (Corresponding Author) is supported by the Science and Engineering Research Board (SERB), Department of Science and Technology (DST), Govt. of India(Grant Number-CRG/2022/005332).

\end{document}